\newtheorem{thm}{Theorem}
\newtheorem{prop}[thm]{Proposition}
\newtheorem{cor}[thm]{Corollary}
\theoremstyle{definition}
\newtheorem{defn}[thm]{Definition}
\newtheorem{prob}[thm]{Problem}
\newcommand{\rkn}[3]{R^{\textrm{KG}}_{#1}(#2, #3)}
\title{Ramsey numbers in Kneser graphs}
\author{Emily Heath\thanks{California State Polytechnic University Pomona.}
\and Grace McCourt\thanks{Iowa State University.}
\and Alex Parker\footnotemark[2]
\and Coy Schwieder\footnotemark[2]
\and Shira Zerbib\footnote{Iowa State University. Supported by NSF CAREER award no. 2336239  and Simons Foundation award no. MP-TSM-00002629.}}
\date{}
\begin{document}

\maketitle
\begin{abstract}
We define the {\em $r$-Kneser Ramsey number} $\rkn{r}{s}{t}$ as the minimum integer $n$ such that every red/blue edge-coloring of the Kneser graph $\textrm{KG}(n,r)$ contains a red $s$-clique  or a blue $t$-clique. We obtain general bounds on the numbers $\rkn{r}{s}{t}$,  and make progress on two related Ramsey-type problems, one raised by Holmsen, Hrusak, and Roldán-Pensado, and the other posted by Pálvölgyi. 
\end{abstract}
\section{Introduction}

The {\em Ramsey number} $R(s,t)$ is the minimum integer $n$ such that any red/blue edge-coloring of the complete graph  $K_n$ contains either a red $K_s$ or a blue $K_t$. Ramsey numbers and their many different variations have been extensively studied, and they  form one of the main pillars of extremal combinatorics. In this paper we consider a  variation on Ramsey numbers in which the host graph is a  Kneser graph rather than the complete graph. The {\em Kneser graph} $\textrm{KG}(n,r)$ is the graph whose vertex set is the set ${[n]\choose r}$ of all $r$-subsets of $[n]$, and two $r$-sets form an edge if they are disjoint. We define the {\em Kneser Ramsey numbers} as follows.

\begin{defn}
Let $r \ge 1$ be an integer. The {\em $r$-Kneser Ramsey number} $\rkn{r}{s}{t}$ is the minimum integer $n$ such that every red/blue edge-coloring of $\textrm{KG}(n,r)$ contains a red $K_s$ or a blue $K_t$.
\end{defn}

Note that  $\rkn{1}{s}{t} = R(s,t)$, so this definition  generalizes the notion of Ramsey numbers.

Our motivation to investigate Kneser Ramsey numbers originated with the following fascinating Ramsey-type question, which was  asked by Michael Hrusak for $\alpha = 1/2$ and then by Andreas Holmsen and Edgardo  Roldán-Pensado for all $\alpha$ \cite{HR}: 
\begin{prob}[Holmsen - Hrusak - Roldán-Pensado]\label{probHR}
    Fix $\alpha \in (0,1)$ and an integer $t>1$.  Does there exist a large enough $n$ (depending on $\alpha$ and $t$) such that in every red/blue edge-coloring of the complete graph on vertex set $V = {[n] \choose \alpha n}$ there is a monochromatic clique on vertex set $U \subset V$, with the property that
   the vertex-covering number of the hypergraph $([n],U)$ is at least $t$?
     \end{prob}
Although Problem \ref{probHR} is not about Kneser graphs, one observes that a positive answer will follow if in every red/blue edge-coloring of $\textrm{KG}(n,\alpha n)$ there is a monochromatic clique of size $t$.  That is, if $\rkn{\alpha n}{t}{t}\le n$ for some  $n$, then we get a positive answer to Problem \ref{probHR}. 

However,  Problem \ref{probHR} is interesting even when $\textrm{KG}(n,\alpha n)$ is empty. 
In fact,  Holmsen and Roldán-Pensado were most interested in the  special case when  $\alpha = k/(2k-1)$ and $t=2$, as formulated below.
\begin{prob}\label{probHRspecial}
Let $V = {[2k-1] \choose k}$. Is it true that for some large enough $k$, in every red/blue edge-coloring of the complete graph on vertex set $V$ there is a monochromatic triangle $ABC$ with $A,B,C\in V$ and $A\cap B \cap C = \emptyset$? 
\end{prob}

Perhaps a more approachable problem is the following.
\begin{prob}\label{probHRspecial2}
Determine the smallest number $\beta>1$ such that  for some large enough $k$, in every red/blue edge-coloring of the complete graph on vertex set $V={[\beta k] \choose k}$ there is a monochromatic triangle $ABC$ with $A,B,C\in V$ and $A\cap B \cap C = \emptyset$. 
\end{prob}
It is easy to see that $\beta \le 3$. Indeed, let $S_1,\dots, S_6 \in {[3k] \choose k}$ be six sets with the property that for every $i\in [3k]$, $i$ belongs to exactly two of the sets  $S_1,\dots, S_6$. (It is easy to construct such sets $S_1,\dots, S_6$:  take the six distinct sets in  two disjoint perfect matchings in the $k$-uniform hypergraph on vertex set $[3k]$.) Now consider a red/blue edge-coloring of the complete graph on vertices $S_1,\dots, S_6$. Since $R(3,3)=6$, this graph contains a monochromatic triangle $S_iS_jS_k$, and $S_i\cap S_j \cap S_k = \emptyset $ for every distinct $i,j,k \in [6]$.

In this paper we make progress on Problems \ref{probHR}-\ref{probHRspecial2}  in two different ways. First, in Section \ref{HRproblems} we improve the upper bound on $\beta$ in Problem \ref{probHRspecial2} to $7/3$.
\begin{thm}\label{thm: beta}
 Let $k\ge 12$ be an integer divisible by 6. Then   in every red/blue edge-coloring of the complete graph on vertex set $V={[\frac{7}{3}k] \choose k}$ there is a monochromatic triangle $ABC$ with $A,B,C\in V$ and $A\cap B \cap C = \emptyset$. 
\end{thm}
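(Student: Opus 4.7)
The plan is to reduce \cref{thm: beta} to a purely finite Ramsey-type claim via a partition-and-inflate construction, and then to prove that finite claim by exploiting pairwise-disjoint triples in $\binom{[7]}{3}$.

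Using that $3\mid k$, I would partition $[\tfrac{7k}{3}]$ into seven equal parts $P_1,\dots,P_7$ of size $k/3$. For each triple $T\in\binom{[7]}{3}$ set $S_T:=\bigcup_{i\in T}P_i$, giving a family $\mathcal{S}=\{S_T : T\in\binom{[7]}{3}\}\subseteq V$ of $35$ $k$-subsets. The identity
\[
S_{T_1}\cap S_{T_2}\cap S_{T_3}\;=\;\bigcup_{p\in T_1\cap T_2\cap T_3}P_p
\]
shows that the triple intersection is empty if and only if $T_1\cap T_2\cap T_3=\emptyset$ as subsets of $[7]$. It therefore suffices to prove the following finite claim: every $2$-coloring of the edges of the complete graph on vertex set $\binom{[7]}{3}$ contains a monochromatic triangle $\{T_1,T_2,T_3\}$ with $T_1\cap T_2\cap T_3=\emptyset$.

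For the finite claim the driving observation is that if a monochromatic triangle uses any \emph{Kneser edge} $\{T,T'\}$ — a pair with $T\cap T'=\emptyset$ — then its triple intersection is automatically empty, being contained in $T\cap T'$. Hence it is enough to locate, in any $2$-coloring of $K_{35}$, a monochromatic triangle containing at least one Kneser edge. To do so I would fix a vertex $T\in\binom{[7]}{3}$ and examine its four Kneser neighbors, namely the elements of $\binom{[7]\setminus T}{3}$, which span a copy of $K_4$ inside $K_{35}$. A case analysis on the colors of the four edges from $T$ into this $K_4$, combined with the colors of the six edges of the $K_4$ itself, either produces a monochromatic Kneser-edge triangle through $T$ directly, or forces a very rigid local pattern (for instance, all four $T$-edges of one color and all six $K_4$-edges of the other). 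Averaging the local picture over the $35$ choices of $T$, together with $R(3,3)=6$ applied to a well-chosen $6$-vertex transversal of $\binom{[7]}{3}$, should rule out the global consistency of the ``bad'' local patterns.

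The main obstacle I anticipate is precisely this global consistency step: each local bad pattern is individually consistent, and the argument must rule them out only in combination. This is where I expect the stronger divisibility assumption $6\mid k$ to enter: halving each part $P_i$ into two pieces of size $k/6$ allows one to enrich $\mathcal{S}$ with auxiliary $k$-sets that refine a problematic local configuration, giving enough flexibility to invoke $R(3,3)=6$ on a cleverly chosen $6$-vertex transversal and force a monochromatic Kneser-edge triangle. If that refined-partition maneuver is not enough on its own, a fallback would be Goodman-type counting of monochromatic triangles against the fact that $K_{35}$ has exactly $70$ Kneser edges, forcing at least one monochromatic triangle to contain such an edge.
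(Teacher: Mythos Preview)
Your reduction is clean and correct: partitioning $[\tfrac{7k}{3}]$ into seven blocks of size $k/3$ and pulling back to triples in $\binom{[7]}{3}$ does reduce the theorem (in fact under the weaker hypothesis $3\mid k$) to the finite claim you state. The problem is that you do not prove the finite claim, and neither of your two proposed routes closes the gap as written.

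For the Goodman fallback, the numbers simply do not work. Of the $\binom{35}{3}=6545$ triangles in $K_{35}$, exactly $2975$ have non-empty triple intersection (by inclusion--exclusion: $7\binom{15}{3}-\binom{7}{2}\binom{5}{3}=3185-210$), while Goodman only guarantees about $1488$ monochromatic triangles. Since $1488<2975$, there is no contradiction in all monochromatic triangles being ``bad''. Restricting further to triangles containing a Kneser edge makes things worse: there are only $2100$ such triangles. So counting alone will not do it. For the local case analysis, you yourself identify the obstacle: a single vertex $T$ with all four Kneser edges red and all six edges of its Kneser-neighbor $K_4$ blue is locally consistent, and you give no argument that these local patterns cannot be glued into a global coloring. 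The vague appeal to refining each block in half and then finding ``a cleverly chosen $6$-vertex transversal'' is not a proof; it is a hope.

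The paper does not go through the uniform seven-part partition at all. Instead it hand-builds a small number of specific $k$-sets: three sets $S_1,S_2,S_3$ inside $[3k/2]$ with $S_1\cap S_2\cap S_3=\emptyset$, three sets $F_1,F_2,F_3$ of the form $T\cup T_i$ (where $|T|=5k/6$ and the $T_i$ are disjoint $(k/6)$-sets --- this is where $6\mid k$ enters), and six sets $H_1,\dots,H_6$ all containing $[k-1]$. The proof is a short direct case analysis: if $S_1S_2S_3$ is not monochromatic, use the bichromatic edge to set up colors; then branch on whether $F_1F_2F_3$ is monochromatic; in the residual case, apply $R(3,3)=6$ to the $H_i$ and finish with a pigeonhole argument on the edges between $\{F_1,F_j\}$ and the monochromatic $H$-triangle. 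No averaging, no global consistency argument, no counting.
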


Second, in the same section, we give a positive answer to Problem \ref{probHR} when $\alpha<1/4$ and $t=3$.
\begin{thm}\label{thm: nalphan}
    For any rational number $\alpha$ such $0 < \alpha < \frac{1}{4}$, there exists some large enough integer $n$ such that $\alpha n$ is an integer and any red/blue edge-coloring of $\textrm{KG}(n, \alpha n)$ contains a monochromatic triangle.
\end{thm}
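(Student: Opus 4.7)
The plan is to argue by contradiction: suppose some $2$-coloring of $\textrm{KG}(n, \alpha n)$ avoids monochromatic triangles. The key structural observation is that for any vertex $A$, each Kneser edge within the red-neighborhood $N_R(A) = \{B : c(AB)=\text{red}\}$ must be blue, because otherwise $A$ together with its two endpoints would form a red monochromatic triangle. Thus if $N_R(A)$ contains three pairwise disjoint sets $B_1, B_2, B_3$, then $\{B_1, B_2, B_3\}$ itself forms a blue monochromatic triangle. So the assumption forces $N_R(A)$ to contain no matching of size three, and symmetrically $N_B(A)$ does not either.

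Next I would apply Frankl's theorem on the Erd\H{o}s matching conjecture (case $s=2$): for $n' \ge 3r-2$, any family $\mathcal{F} \subseteq \binom{[n']}{r}$ with no matching of size three satisfies $|\mathcal{F}| \le \binom{n'}{r} - \binom{n'-2}{r}$, with equality attained only by cover families $\{B : B \cap P \ne \emptyset\}$ for some two-element $P$. Taking $n' = (1-\alpha)n$ and $r = \alpha n$, the condition holds for $\alpha < 1/4$ and $n$ large. Combining the Frankl bound on $|N_R(A)|$ and $|N_B(A)|$ with the identity $|N_R(A)| + |N_B(A)| = \binom{(1-\alpha)n}{\alpha n}$ gives
\[
2 \binom{(1-\alpha)n - 2}{\alpha n} \le \binom{(1-\alpha)n}{\alpha n},
\]
which asymptotically yields a direct contradiction only in the range $\alpha < (3-\sqrt{2})/7 \approx 0.226$.

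To extend the contradiction to the full range $\alpha < 1/4$, I would invoke a stability version of Frankl's theorem, which asserts that a near-extremal family with no matching of size three must be structurally close to a cover family. Applying this to both $N_R(A)$ and $N_B(A)$ would show that each is nearly equal to a cover family $\{B : B \cap P \ne \emptyset\}$ for some two-element $P_R, P_B \subseteq [n]\setminus A$. Since these two approximate cover families must together partition $\binom{[n]\setminus A}{\alpha n}$, the ``missing'' subfamily $\binom{[n]\setminus(A \cup P_R \cup P_B)}{\alpha n}$ of sets disjoint from $P_R \cup P_B$---of relative size at least $(1-\alpha')^4$ with $\alpha' = \alpha/(1-\alpha) < 1/3$---must be absorbed into the exceptional portions of $N_R(A) \cup N_B(A)$. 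For $\alpha < 1/4$ and $n$ large, this missing subfamily itself contains three pairwise disjoint sets, and combined with the quantitative stability bound on the exceptional portions, this forces a contradiction.

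The main obstacle is quantitative: near $\alpha = 1/4$, the sizes $|N_R(A)|, |N_B(A)|$ can be as small as $\approx 4/9$ of the total and hence only mildly near extremal, so naive stability results give only modest control on the exceptional parts. A viable alternative, if single-vertex stability is insufficient, is a two-vertex decomposition: fix disjoint $A_1, A_2$ with $c(A_1 A_2) = \text{red}$, partition $\binom{[n]\setminus(A_1 \cup A_2)}{\alpha n}$ by the four color patterns $(c(A_1 C), c(A_2 C))$, observe that the $(R,B)$ and $(B,R)$ classes must be intersecting (via Erd\H{o}s--Ko--Rado) while the $(B,B)$ class has no matching of size three (via Frankl), and then combine these sharper bounds against the partition constraint to close the gap up to $\alpha < 1/4$.
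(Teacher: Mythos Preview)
Your opening move is exactly right and matches the paper: fix a vertex $A$, observe that every Kneser edge inside $N_R(A)$ is blue (else a red triangle through $A$), and conclude that neither $N_R(A)$ nor $N_B(A)$ can contain three pairwise disjoint sets. From here, however, you take an unnecessary detour. Bounding $|N_R(A)|$ and $|N_B(A)|$ via Frankl's extremal result and then invoking stability is much weaker than what is actually needed, and as you yourself note, the size inequality $2\binom{n'-2}{r}\le\binom{n'}{r}$ only fails for $\alpha<(3-\sqrt{2})/7$, leaving a genuine gap that your stability sketch does not close.

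The paper instead applies the Alon--Frankl--Lov\'asz partition theorem (their Theorem~\ref{cor: Cliques in Kneser Partition}): if $n'\ge (r+1)k-1$ and $\binom{[n']}{r}$ is split into two parts, then one part contains $k$ pairwise disjoint sets. With $k=3$, $n'=(1-\alpha)n$, and $r=\alpha n$, the hypothesis $n'\ge 3r+2$ becomes $(1-4\alpha)n\ge 2$, which holds for every $\alpha<\tfrac14$ once $n$ is large. This single invocation finishes the proof immediately for the full range, with no stability analysis and no case split. In other words, the correct tool is a \emph{partition} statement rather than an \emph{extremal size} statement; the former is strictly stronger for this purpose.

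Your proposed fallback, the two-vertex decomposition, does not rescue the argument either. Fixing a red edge $A_1A_2$ indeed forces the $(R,R)$ class to be empty and the $(R,B)$, $(B,R)$ classes to be intersecting, while $(B,B)$ has no matching of size three. But now the ground set has shrunk to $(1-2\alpha)n$ elements, and whether you combine Erd\H{o}s--Ko--Rado with Frankl's size bound or apply Alon--Frankl--Lov\'asz with parameters $(k_1,k_2,k_3)=(3,2,2)$, the resulting constraint is $(1-2\alpha)n\ge 3\alpha n+2$, i.e.\ $\alpha<\tfrac15$, which is worse than the single-vertex bound. So the missing idea is precisely the Alon--Frankl--Lov\'asz theorem applied at the one-vertex level; once you have it, everything you wrote after the first paragraph becomes unnecessary.
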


Note that when $ \alpha = \frac{1}{4}$, for $n=8$ there exists a red/blue edge-coloring of $\textrm{KG}(n, \alpha n)$ with no monochromatic triangle, as implied by Theorem \ref{thm: computational} (an explicit coloring is given in Appendix A). We do not know if this holds for larger $n$.

We then continue with a systematic study of Kneser Ramsey numbers for various parameters $r,s,t$. 
The following trivial upper bound on the $r$-Kneser Ramsey number follows from Ramsey's theorem: $$\rkn{r}{s}{t} \leq R(s,t)\cdot r.$$ 

Indeed,  when $n\ge R(s,t)\cdot r$, the graph $\textrm{KG}(n,r)$ contains a clique of size $R(s,t)$, and any red/blue edge-coloring of it contains a red $K_s$ or a blue $K_t$. This in particular shows the existence of the number $\rkn{r}{s}{t}$. Note also that if $n < kr$, then $\textrm{KG}(n,r)$ does not contain a clique of size $k$, and thus we have a trivial lower bound  $$\rkn{r}{s}{t} \geq r \cdot \max\{s,t\}.$$ 
 
In order to prove Theorem \ref{thm: nalphan}, we prove  an auxiliary result (Theorem \ref{thm: Alpha Cliques}) that in particular implies the following  bound on $\rkn{r}{s}{t}$.
\begin{thm}\label{cor: ramsey Kneser cliques}
    Fix $s, t \geq 3$ and suppose $R(s - 1, t) \geq R(s, t - 1)$. Set $M = R(s - 1, t)$ and $m = R(s, t - 1)$. 
  Then
    $$\rkn{r}{s}{t} \leq (M+1)r + m - 1, $$
    and in particular, $$\rkn{r}{s}{s} \leq (R(s, s - 1) + 1)r + R(s, s - 1) - 1.$$
\end{thm}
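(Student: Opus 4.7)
The plan is to adapt the classical inductive proof of $R(s,t) \leq R(s-1,t) + R(s,t-1)$ to the Kneser setting, invoking the auxiliary Theorem \ref{thm: Alpha Cliques} at the step where classical Ramsey uses pigeonhole on a vertex's neighborhood.

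Let $n = (M+1)r + m - 1$ and fix any red/blue edge-coloring of $\textrm{KG}(n,r)$. I would pick an arbitrary $r$-subset $v \in \binom{[n]}{r}$ and examine its Kneser-neighborhood, which is a copy of $\textrm{KG}(n-r, r) = \textrm{KG}(Mr + m - 1, r)$. The edges between $v$ and its neighbors partition this neighborhood into the red-neighborhood $N_R$ and the blue-neighborhood $N_B$, according to the color of the edge to $v$.

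The crux is the following structural claim, which I plan to extract from Theorem \ref{thm: Alpha Cliques}: either $N_R$ contains $M$ pairwise disjoint $r$-sets, or $N_B$ contains $m$ pairwise disjoint $r$-sets. Once this is in hand, the conclusion follows by classical Ramsey. If $N_R$ contains $M$ pairwise disjoint vertices $u_1,\dots,u_M$, then the complete graph on these vertices is $2$-edge-colored, and $R(s-1,t) = M$ yields either a red $K_{s-1}$ (which together with $v$ becomes a red $K_s$, since each $(v,u_i)$ is red) or a blue $K_t$, and we are done. Symmetrically, if $N_B$ contains $m$ pairwise disjoint vertices, then $R(s,t-1) = m$ produces either a red $K_s$ or a blue $K_{t-1}$ that extends through $v$ to a blue $K_t$. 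The ``in particular'' clause is the specialization $s=t$, in which case $M = R(s-1,s) = R(s,s-1) = m$.

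The main obstacle is the structural claim itself. A naive matching count is insufficient: the ambient $\textrm{KG}(Mr + m - 1, r)$ has maximum Kneser-matching equal to $M$ (at least when $m \leq r$), and $(M-1) + (m-1) \geq M$ for all $m \geq 2$, so one cannot simultaneously rule out the thresholds $\mu(N_R) \leq M-1$ and $\mu(N_B) \leq m-1$ by counting alone. The content of Theorem \ref{thm: Alpha Cliques} is precisely to upgrade this count to the required dichotomy, presumably via an extremal or inductive argument that exploits the $m-1$ extra elements together with the fact that the partition of the neighborhood arises from a coherent edge-coloring of the full Kneser graph rather than being an arbitrary partition.
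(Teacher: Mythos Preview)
Your overall architecture is exactly the paper's: pick a vertex $v$, partition its Kneser neighborhood $\binom{[n-r]}{r}$ into $N_R$ and $N_B$, obtain either $M$ pairwise disjoint sets in $N_R$ or $m$ pairwise disjoint sets in $N_B$, and finish with classical Ramsey. The gap is in how you plan to justify the structural dichotomy. You propose to ``extract'' it from Theorem~\ref{thm: Alpha Cliques}, but the statement of that theorem only asserts the existence of a suitable $n$ for a given $\alpha$; it does not give any information about partitions of $\binom{[Mr+m-1]}{r}$. In fact the dependency in the paper runs the opposite direction: the structural claim is the input to both Theorem~\ref{cor: ramsey Kneser cliques} and Theorem~\ref{thm: Alpha Cliques}, not a consequence of either.

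The missing ingredient is the Alon--Frankl--Lov\'asz theorem (Theorem~\ref{thm: Cliques in Kneser Partition}), applied with $t=2$, $k_1=M$, $k_2=m$: whenever $\binom{[n']}{r}$ is partitioned into $F_1,F_2$ with $n' \ge Mr + (m-1)$, some $F_i$ contains $k_i$ pairwise disjoint sets. This holds for \emph{every} partition, so your final speculation---that the argument must exploit the fact that $N_R, N_B$ arise from a coherent edge-coloring rather than being arbitrary---is a red herring. Once you invoke Alon--Frankl--Lov\'asz, the proof is complete exactly as you outlined; no further structure of the coloring is needed at this step.
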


In Section \ref{sec:bounds} we establish further upper  bounds on $\rkn{r}{s}{t}$ for certain families of $s,t$. 

\begin{thm}\label{thm: upper bound s, s-1}
For $s \geq 4$,  we have 
\[\rkn{r}{s-1}{s} \leq rR(s-2,s) + 2(R(s-2,s-1)-1) + 2r. \] 
\end{thm}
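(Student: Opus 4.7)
My plan is to imitate the classical Ramsey recursion $R(s-1,s)\le R(s-2,s)+R(s-1,s-1)$ in the Kneser setting, pivoting on a single blue edge.  Write $M=R(s-2,s)$ and $m=R(s-2,s-1)$, and assume $n = rM+2(m-1)+2r$.  If $\textrm{KG}(n,r)$ contains no blue edge then, since $n\ge(s-1)r$, any $s-1$ pairwise disjoint $r$-sets already form a red $K_{s-1}$.  Otherwise I fix a blue edge $\{u,v\}$ and let $W=[n]\setminus(u\cup v)$, so $|W|=n-2r\ge rM+2(m-1)$.  Every $r$-subset $C\subseteq W$ is Kneser-adjacent to both $u$ and $v$, and I assign to $C$ a type in $\{\mathrm{RR},\mathrm{RB},\mathrm{BR},\mathrm{BB}\}$ recording the colors of the edges $Cu$ and $Cv$ (first coordinate = color of $Cu$).

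I then enumerate the ``winning configurations'' enabled by the blue edge $uv$: a red $K_{s-2}$ inside $\mathrm{RR}\cup\mathrm{RB}$ extends by $u$ to a red $K_{s-1}$, and symmetrically one inside $\mathrm{RR}\cup\mathrm{BR}$ extends by $v$; a blue $K_{s-1}$ inside $\mathrm{RB}$ extends by $v$ to a blue $K_s$, and similarly one in $\mathrm{BR}$ extends by $u$; a blue $K_{s-2}$ inside $\mathrm{BB}$ extends by both $u$ and $v$ (using that $uv$ is itself blue) to a blue $K_s$; and any blue $K_s$ already lying inside $\textrm{KG}(W,r)$ suffices on its own.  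Since $|W|\ge rM=rR(s-2,s)$, I pick $M$ pairwise disjoint $r$-sets $C_1,\dots,C_M$ in $W$ and apply Ramsey's theorem to the $K_M$ they form, obtaining either a blue $K_s$ (done) or a red $K_{s-2}$, which I may assume is on $C_1,\dots,C_{s-2}$.  If all these $C_i$ avoid $\mathrm{BR}$, or all avoid $\mathrm{RB}$, the observations above finish the proof.

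The main obstacle is the remaining ``mixed'' case: some $C_i\in\mathrm{BR}$ and some $C_j\in\mathrm{RB}$ simultaneously, so neither $u$ nor $v$ alone extends the red $K_{s-2}$.  To resolve this, I plan to exploit the $2(m-1)$ unused elements $Z=W\setminus\bigcup_k C_k$ together with the unused disjoint $r$-sets $C_{s-1},\dots,C_M$, constructing an auxiliary collection of $r$-sets \,---\, either by swapping ``wrong-type'' elements of the offending $C_i$'s with elements of $Z$, or by choosing fresh disjoint $r$-sets that incorporate elements of $Z$ \,---\, and invoking a second Ramsey application with parameter $m=R(s-2,s-1)$.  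The aim of that second application is to \emph{force} a type-controlled substructure: a red $K_{s-2}$ all of whose vertices sit in one of $\mathrm{RR}\cup\mathrm{RB}$ or $\mathrm{RR}\cup\mathrm{BR}$, a blue $K_{s-1}$ contained entirely in $\mathrm{RB}$ or entirely in $\mathrm{BR}$, or a blue $K_{s-2}$ contained in $\mathrm{BB}$, at which point the second paragraph's observations conclude.  Quantitatively, the $2(m-1)+2r$ extra room in the bound should be exactly what is needed to carry out this second Ramsey step, and engineering that step so that the offending types are ruled out is the real difficulty of the argument.
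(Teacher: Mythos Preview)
Your setup matches the paper's: pivot on a blue edge $\{u,v\}$, pass to $W=[n]\setminus(u\cup v)$ with $|W|=rM+2(m-1)$, and partition $\binom{W}{r}$ into the four type classes $\mathrm{RR},\mathrm{RB},\mathrm{BR},\mathrm{BB}$. Your list of ``winning configurations'' is also exactly right, and is equivalent to the clique-number bounds
\[
\omega(\mathrm{RR}\cup\mathrm{RB})<R(s-2,s),\qquad \omega(\mathrm{BR})<R(s-2,s-1),\qquad \omega(\mathrm{BB})<R(s-2,s-1).
\]

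Where you diverge from the paper is in how to \emph{find} a clique triggering one of these. The paper does not pick a single $M$-clique and then try to repair it; instead it applies the Alon--Frankl--Lov\'asz theorem (Theorem~\ref{thm: Cliques in Kneser Partition}) directly to the three-part partition $F_1=\mathrm{RR}\cup\mathrm{RB}$, $F_2=\mathrm{BR}$, $F_3=\mathrm{BB}$ of $\binom{W}{r}$ with targets $k_1=M$, $k_2=k_3=m$. Since $|W|=k_1r+(k_2-1)+(k_3-1)$, AFL guarantees a $k_i$-clique entirely inside some $F_i$, which immediately contradicts the bounds above. There is no mixed case.

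Your approach, by contrast, produces a red $K_{s-2}$ with no control over which type classes its vertices occupy, and the ``mixed case'' you isolate is a genuine gap, not a technicality. The proposed fix --- a second Ramsey application using the $2(m-1)$ leftover ground elements and some element-swapping --- does not work as stated: you would need $mr$ fresh ground elements to host $m$ new pairwise disjoint $r$-sets, but you only have $2(m-1)$; and swapping elements of the $C_i$ with elements of $Z$ does nothing to control the colors of $C_iu$ and $C_iv$, which are fixed by the adversary's coloring, so you have no mechanism for steering the new red $K_{s-2}$ away from a mixed type pattern. What you are really trying to prove in that step is ``one of the three classes contains a large enough matching in $\textrm{KG}(W,r)$,'' and that is precisely the content of AFL. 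Invoke it and the proof is one line.
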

\begin{thm}\label{thm: upper bound r,3,t}
    For $t \geq 5$, we have $\rkn{r}{3}{t} \leq rR(3,t-2) + 2r+2t-3.$
\end{thm}

We then derive a general lower bound, and an improved lower bound for $s=t=3$ and $r\ge 3$. 

\begin{prop}\label{thm: chromatic number bound}
    For  $r \geq 1$ and $s,t \geq 2$, we have $\rkn{r}{s}{t} \geq R(s, t) + 2r - 2.$
\end{prop}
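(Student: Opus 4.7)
The plan is to prove $\rkn{r}{s}{t} > R(s,t) + 2r - 3$ by constructing, on $n := R(s,t) + 2r - 3$, an explicit red/blue edge-coloring of $\textrm{KG}(n,r)$ with no red $K_s$ and no blue $K_t$. The construction pulls back a Ramsey-extremal coloring of a smaller complete graph along a proper vertex coloring of the Kneser graph.

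First, set $N := R(s,t) - 1 = n - 2r + 2$. By definition of $R(s,t)$, we may fix a red/blue coloring $c : \binom{[N]}{2} \to \{\text{red},\text{blue}\}$ of $K_N$ containing no red $K_s$ and no blue $K_t$. Next, I would exhibit an explicit proper vertex coloring $f : V(\textrm{KG}(n,r)) \to [N]$, namely
\[
f(A) = \begin{cases} \min A & \text{if } \min A \leq n - 2r + 1, \\ n - 2r + 2 & \text{otherwise.} \end{cases}
\]
A quick check verifies that if $A,B \in \binom{[n]}{r}$ are disjoint then $f(A) \neq f(B)$: the two values $\min A, \min B$ are distinct, and they cannot both exceed $n - 2r + 1$, since then $A \cup B$ would be a $2r$-subset of the $(2r - 1)$-element set $\{n - 2r + 2, \ldots, n\}$.

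Now define an edge-coloring of $\textrm{KG}(n,r)$ by assigning the edge $\{A,B\}$ the color $c(\{f(A), f(B)\})$; this is well-defined because $f(A) \neq f(B)$ for every edge. Suppose for contradiction that $A_1, \ldots, A_s$ form a red clique in $\textrm{KG}(n,r)$ (the blue case is symmetric). Since the $A_i$ are pairwise disjoint and $f$ is proper, $f(A_1), \ldots, f(A_s)$ are $s$ distinct vertices of $K_N$, and by construction every pair among them is colored red under $c$. This produces a red $K_s$ in $K_N$, contradicting the choice of $c$.

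Finally, I would briefly dispose of the degenerate case $n < 2r$, which happens only when $R(s,t) < 3$, i.e.\ $s = t = 2$; there $\textrm{KG}(n,r)$ has no edges and the bound $\rkn{r}{2}{2} \geq 2r$ is trivial, matching the desired inequality. There is no genuine obstacle here: the proof is essentially a two-line pullback argument, and the only thing to be careful about is that the explicit proper coloring $f$ uses exactly $N = n - 2r + 2$ colors so that it matches the number of vertices available in the Ramsey-extremal coloring $c$.
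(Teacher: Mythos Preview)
Your proof is correct and follows essentially the same approach as the paper: pull back a Ramsey-extremal coloring of $K_{R(s,t)-1}$ along a proper vertex coloring of $\textrm{KG}(n,r)$ with $n-2r+2$ colors. The only cosmetic difference is that you write down the explicit Kneser coloring $f$, whereas the paper simply invokes Lov\'asz's theorem that $\chi(\textrm{KG}(n,r)) = n - 2r + 2$ to guarantee such a proper coloring exists.
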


\begin{thm}\label{thm: 3_3_lower_bound}
    For $r \geq 2$, we have $\rkn{r}{3}{3} \geq 3r + 2.$
\end{thm}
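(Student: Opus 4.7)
To establish the lower bound $\rkn{r}{3}{3} \ge 3r+2$, the plan is to exhibit an explicit red/blue edge-coloring of $\textrm{KG}(3r+1,r)$ with no monochromatic triangle. The construction I would propose is extremely compact: fix any $2$-element subset $X \subseteq [3r+1]$ (for definiteness, $X = \{1,2\}$), and color an edge $\{A,B\}$ \emph{red} if $|A \cap X| = |B \cap X|$ and \emph{blue} otherwise.

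The key structural observation is that for any triangle $\{A,B,C\}$ in $\textrm{KG}(3r+1,r)$, the three $r$-subsets $A,B,C$ are pairwise disjoint and so cover exactly $3r$ of the $3r+1$ ground elements, leaving a unique ``leftover'' element $x \in [3r+1]$. Writing $\alpha = |A \cap X|$, $\beta = |B \cap X|$, $\gamma = |C \cap X|$, the sum $\alpha + \beta + \gamma$ simply counts the elements of $X$ covered by $A \cup B \cup C$; it equals $|X| = 2$ when $x \notin X$ and $|X|-1 = 1$ when $x \in X$. In either case, $\alpha+\beta+\gamma \in \{1,2\}$.

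From here, both monochromatic cases fail on arithmetic grounds. A red triangle would demand $\alpha = \beta = \gamma$, yielding a sum $3\alpha$ that is never equal to $1$ or $2$ for any nonnegative integer $\alpha$. A blue triangle would demand that $\alpha, \beta, \gamma$ are pairwise distinct nonnegative integers, forcing their sum to be at least $0+1+2 = 3 > 2$. Neither pattern is realizable, so the coloring avoids monochromatic triangles and witnesses the claimed lower bound.

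I do not foresee a genuine obstacle here; the only ``trick'' is the choice $|X| = 2$, which is precisely the size at which the sum constraint $\alpha+\beta+\gamma \in \{1,2\}$ simultaneously rules out the ``all equal'' pattern (by divisibility modulo $3$) and the ``all pairwise distinct'' pattern (by the minimum sum $0+1+2$ of three distinct nonnegative integers). The remainder is a routine verification.
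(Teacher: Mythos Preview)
Your proof is correct, and in fact the coloring you produce is (after relabeling the ground set and swapping the two colors) the \emph{same} coloring the paper uses. To see this, take the paper's distinguished elements $\{3r,3r+1\}$ as your set $X$: the paper's classes $X$, $Y$, $Z$ are exactly the vertices with $|A\cap X|\ge 1$ and $3r+1\in A$, with $|A\cap X|=1$ and $3r\in A$, and with $|A\cap X|=0$, respectively, and one checks that the paper colors an edge blue precisely when your coloring would color it red.

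Where the two write-ups genuinely differ is in the \emph{verification} that no monochromatic triangle exists. The paper argues structurally: it observes that $X$ and $Y$ are independent, that $G[Z]\cong\textrm{KG}(3r-1,r)$ is triangle-free, and then classifies the possible locations of a triangle's three vertices among $X,Y,Z$. Your argument replaces this case analysis with a single arithmetic invariant, the triple $(\alpha,\beta,\gamma)$ of intersection sizes, and rules out both monochromatic patterns by the constraint $\alpha+\beta+\gamma\in\{1,2\}$. This is shorter and more uniform; it also makes transparent \emph{why} a two-element set is the right choice (it is exactly the size at which $3\alpha\notin\{|X|-1,|X|\}$ and $0+1+2>|X|$ both hold). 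The paper's approach, on the other hand, exhibits the structure of the coloring more explicitly, which may be useful if one wanted to extend the construction.
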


Using a computer code, we  also obtain  an exact value on $\rkn{2}{3}{3}$ and improve bounds on other Kneser Ramsey numbers. The computational proof of the next theorem is discussed in Subsection \ref{sec:exact}.
\begin{thm}\label{thm: computational}
    We have
        $$\rkn{2}{3}{3}=9,~~\rkn{3}{3}{3}\leq13, \text{~~and~~}  
        12 \leq \rkn{2}{4}{3} \leq 14.$$
\end{thm}

 Some of our results are summarized in Table 1 (for $r=2$) and Table 2 (for $r=3$). 
In each cell of the tables, the first number listed is the lower bound and the second is the upper bound. The subscripts on the numbers are the theorems in which we obtain the bound. For example, we have $16 \le \rkn{2}{3}{5} \le 23$, where the lower bound follows from Theorem \ref{thm: chromatic number bound} and the upper bound is proven in Theorem \ref{thm: upper bound r,3,t}. If there is only one number, then it is an exact value. Note that $\rkn{r}{s}{t}=\rkn{r}{t}{s}$.

\begin{table}[h!]
\centering
\caption{Bounds on $\rkn{2}{s}{t}$.}
\begin{tabular}{|c|| c| c| c| c|}
\hline
\diagbox{$s$}{$t$} & $t=3$ & $t = 4$ & $t = 5$ & $t = 6$\\
\hline\hline
$s = 3$ & 9\textsubscript{\ref{thm: computational}} & 12\textsubscript{\ref{thm: computational}}, 14\textsubscript{\ref{thm: computational}} & 16\textsubscript{\ref{thm: chromatic number bound}}, 23\textsubscript{\ref{thm: upper bound r,3,t}} & 20\textsubscript{\ref{thm: chromatic number bound}}, 31\textsubscript{\ref{thm: upper bound r,3,t}} \\
\hline
$s = 4$ &  & 20\textsubscript{\ref{thm: chromatic number bound}}, 28\textsubscript{\ref{cor: ramsey Kneser cliques}} & 27\textsubscript{\ref{thm: chromatic number bound}}, 48\textsubscript{\ref{thm: upper bound s, s-1}} & 38\textsubscript{\ref{thm: chromatic number bound}}, 69\textsubscript{\ref{cor: ramsey Kneser cliques}} \\
\hline
$s = 5$ &  &  & 45\textsubscript{\ref{thm: chromatic number bound}}, 76\textsubscript{\ref{cor: ramsey Kneser cliques}} & 60\textsubscript{\ref{thm: chromatic number bound}}, 132\textsubscript{\ref{thm: upper bound s, s-1}} \\
\hline
$s = 6$ &  &  &  & 104\textsubscript{\ref{thm: chromatic number bound}}, 262\textsubscript{\ref{cor: ramsey Kneser cliques}} \\
\hline
\end{tabular}
\end{table}

\begin{table}[h!]
\centering
\caption{Bounds on $\rkn{3}{s}{t}$.}
\begin{tabular}{|c|| c| c| c| c|}
\hline
\diagbox{$s$}{$t$} & $t=3$ & $t = 4$ & $t = 5$ & $t = 6$\\
\hline
\hline
$s = 3$ & 11\textsubscript{\ref{thm: 3_3_lower_bound}}, 13\textsubscript{\ref{thm: computational}} & 13\textsubscript{\ref{thm: chromatic number bound}}, 22\textsubscript{\ref{thm: upper bound s, s-1}} & 18\textsubscript{\ref{thm: chromatic number bound}}, 31\textsubscript{\ref{thm: upper bound r,3,t}} & 22\textsubscript{\ref{thm: chromatic number bound}}, 42\textsubscript{\ref{thm: upper bound r,3,t}} \\
\hline
$s = 4$ &  & 22\textsubscript{\ref{thm: chromatic number bound}}, 38\textsubscript{\ref{cor: ramsey Kneser cliques}} & 29\textsubscript{\ref{thm: chromatic number bound}}, 64\textsubscript{\ref{thm: upper bound s, s-1}} & 40\textsubscript{\ref{thm: chromatic number bound}}, 95\textsubscript{\ref{cor: ramsey Kneser cliques}} \\
\hline
$s = 5$ &  &  & 47\textsubscript{\ref{thm: chromatic number bound}}, 102\textsubscript{\ref{cor: ramsey Kneser cliques}} & 62\textsubscript{\ref{thm: chromatic number bound}}, 174\textsubscript{\ref{thm: upper bound s, s-1}} \\
\hline
$s = 6$ &  &  &  & 106\textsubscript{\ref{thm: chromatic number bound}}, 350\textsubscript{\ref{cor: ramsey Kneser cliques}} \\
\hline
\end{tabular}
\end{table}

\medskip

We conclude this paper with some progress on a Ramsey-type question that was posted by D\"om\"ot\"or  Pálvölgyi  on mathoverflow in 2016 \cite{pav}. 

Given integer $n\ge s\ge r$, an  {\em induced} $\textrm{KG}(s,r)$ is a   subgraph $G$ of the graph $\textrm{KG}(n,r)$ induced on a vertex set ${S\choose r}$, where $S \subset [n]$ is a set of size $s$. We say that $S$ is the {\em ground set} of $G$.  
\begin{prob}[Pálvölgyi \cite{pav}]\label{prob:Domotor}
    For $r>1$, determine the smallest $N\ge  3r$ such that in any red/blue edge-coloring of $\textrm{KG}(N,r)$ there exists either a red induced $\textrm{KG}(3r,r)$ or a blue triangle.
\end{prob}   

In Section \ref{sec:induced} we prove a lower bound, depending on Ramsey numbers, for the number $N$ in Pálvölgyi's problem. 
For integers $r,s,t$, let $R_r(s, t)$ be the minimum integer $n$ such that in any red/blue coloring of the edges of the complete $r$-uniform hypergraph on $n$ vertices $K^r_n$, there is a red $s$-clique or a blue $t$-clique. 

\begin{thm}\label{thm: R66}
    We have $N\ge R_r(3r, 3r)$. That is, for $n = R_r(3r, 3r) - 1$, there is a red/blue coloring of $\textrm{KG}(n,r)$ containing no red induced $\textrm{KG}(3r,r)$ and no blue triangle.
\end{thm}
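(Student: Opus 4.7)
The plan is to bootstrap a monochromatic-clique-free coloring of the complete $r$-uniform hypergraph into an edge coloring of $\textrm{KG}(n,r)$ with the two desired properties.

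Set $n = R_r(3r,3r) - 1$. By the definition of $R_r(3r,3r)$, there is a $2$-coloring $f \colon \binom{[n]}{r} \to \{\text{red},\text{blue}\}$ of the edges of $K^r_n$ such that no $3r$-subset $S \subset [n]$ is \emph{monochromatic} under $f$ (meaning all of $\binom{S}{r}$ receive the same $f$-color). I would use $f$, which lives on vertices of $\textrm{KG}(n,r)$, to define an edge coloring $c$ of $\textrm{KG}(n,r)$ by the simple rule: for disjoint $r$-subsets $A,B$, color the edge $\{A,B\}$ red exactly when $f(A) = f(B)$, and blue otherwise.

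The absence of a blue triangle is then immediate: a blue triangle $ABC$ would require $f(A), f(B), f(C)$ to be pairwise distinct, which is impossible with only two colors. For the absence of a red induced $\textrm{KG}(3r,r)$, suppose for contradiction that some $S \subset [n]$ with $|S| = 3r$ induces a red $\textrm{KG}(3r,r)$, i.e., $f(A) = f(B)$ for every pair of disjoint $r$-subsets $A,B \subseteq S$. I would then show $f$ is constant on $\binom{S}{r}$, contradicting our choice of $f$. Indeed, for any two (not necessarily disjoint) $r$-subsets $A, B \subseteq S$, we have $|S \setminus (A \cup B)| \ge 3r - 2r = r$, so there exists an $r$-subset $C \subseteq S$ disjoint from both $A$ and $B$; then $f(A) = f(C) = f(B)$.

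The main steps are therefore invoking the Ramsey-type definition of $R_r(3r,3r)$, the transfer from vertex coloring of $\textrm{KG}(n,r)$ to edge coloring, and the pigeonhole on two colors. The only place requiring a small observation is the ``third disjoint $r$-subset inside $S$'' argument, which hinges on the specific choice of the parameter $3r$ (just large enough to guarantee room for the third set); this is the step I expect to be the crux, and it is precisely what matches the size $3r$ appearing in Pálvölgyi's problem.
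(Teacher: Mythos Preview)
Your proof is correct and is essentially the same as the paper's: both pull back a $K^r_{3r}$-free two-coloring of $\binom{[n]}{r}$ to a vertex bipartition of $\textrm{KG}(n,r)$ and color an edge red iff its endpoints lie in the same class, so that blue is bipartite and red is $\textrm{KG}(3r,r)$-free. In fact your write-up is slightly more careful than the paper's, since you explicitly justify why a red induced $\textrm{KG}(3r,r)$ forces all of $\binom{S}{r}$ into one class via the ``third disjoint $r$-set'' argument, a step the paper leaves implicit.
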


For example, for $r=2$ we get $N\ge 102$, as $R_2(6,6) = R(6,6) \ge 102$ \cite{kal}.

Many of our proofs use a theorem of Alon, Frankl, and Lov\'asz~\cite{AFL}. 
\begin{thm}[Alon - Frankl - Lov\'asz~\cite{AFL}]\label{thm: Cliques in Kneser Partition}
    Let $n\ge  2r \ge 2$ and  $k_1 \geq k_2 \geq \dots \geq k_t \geq 2$ be integers, and suppose $n \geq k_1r + \sum_{i = 2}^t (k_i - 1)$. Let $F_1, F_2, \dots, F_t$ be a partition of $\binom{[n]}{r}$. Then, for some $i \in [t]$, $F_i$ contains $k_i$ pairwise disjoint sets.
\end{thm}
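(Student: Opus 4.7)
My plan is to construct the required coloring of $\textrm{KG}(n,r)$ by pulling back an extremal Ramsey coloring of the complete $r$-uniform hypergraph. Set $n := R_r(3r,3r) - 1$; by definition of the hypergraph Ramsey number, there exists a red/blue coloring $\chi \colon \binom{[n]}{r} \to \{\mathrm{R},\mathrm{B}\}$ of $K_n^r$ with no monochromatic $3r$-clique. Since the vertices of $\textrm{KG}(n,r)$ are precisely the $r$-subsets of $[n]$, I would use $\chi$ to define an edge-coloring $\phi$ of $\textrm{KG}(n,r)$ as follows: for an edge $\{A,B\}$ of $\textrm{KG}(n,r)$ (so $A,B \in \binom{[n]}{r}$ are disjoint), color it red if $\chi(A) = \chi(B)$ and blue if $\chi(A) \neq \chi(B)$.

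That $\phi$ has no blue triangle is immediate: a blue triangle on pairwise disjoint $r$-sets $A, B, C$ would require $\chi(A), \chi(B), \chi(C)$ to be pairwise distinct, which is impossible with only two colors available.

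For the absence of a red induced $\textrm{KG}(3r,r)$, I would argue by contradiction: if some $S \in \binom{[n]}{3r}$ yielded such an induced copy, then every pair of disjoint $r$-subsets of $S$ would agree under $\chi$. The key observation is that on a ground set of size $3r$, this disjoint-pair agreement propagates to full constancy on $\binom{S}{r}$: given any two (possibly overlapping) $r$-subsets $A, B \subseteq S$, the set $S \setminus (A \cup B)$ has size at least $3r - 2r = r$, so it contains some $r$-subset $C$ disjoint from both $A$ and $B$, whence $\chi(A) = \chi(C) = \chi(B)$. Thus $\chi$ would be monochromatic on $\binom{S}{r}$, contradicting the choice of $\chi$.

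The entire argument is a straightforward reduction from the Kneser coloring problem to hypergraph Ramsey, and I do not anticipate a serious obstacle; the only real content is the ``$|S| \geq 3r$'' step that upgrades disjoint-pair agreement to constancy, together with the two-color pigeonhole that rules out blue triangles.
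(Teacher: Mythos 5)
Your proposal does not address the statement at hand. The statement to be proved is the Alon--Frankl--Lov\'asz partition theorem (Theorem~\ref{thm: Cliques in Kneser Partition}): for \emph{any} partition $F_1,\dots,F_t$ of $\binom{[n]}{r}$ with $n \geq k_1 r + \sum_{i=2}^t (k_i-1)$, some class $F_i$ contains $k_i$ pairwise disjoint $r$-sets. Nothing in your argument engages with this: you never consider a partition of $\binom{[n]}{r}$ into $t$ families, never use the hypothesis on $n$, and never produce pairwise disjoint sets inside a partition class. What you have written is instead a proof of a different result in the paper, namely Theorem~\ref{thm: R66} (the lower bound for P\'alv\"olgyi's problem): you pull back an extremal coloring of $K_n^r$ with $n = R_r(3r,3r)-1$ to color $\textrm{KG}(n,r)$, rule out blue triangles by two-color pigeonhole, and rule out a red induced $\textrm{KG}(3r,r)$ by upgrading agreement on disjoint pairs to constancy on $\binom{S}{r}$. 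As a proof of Theorem~\ref{thm: R66} that argument is correct --- and in fact your propagation step (any two $r$-subsets of a $3r$-set have a common disjoint witness) makes explicit a point the paper's own proof passes over quickly, since a priori a red induced copy could have vertices in both parts of the pulled-back partition. But it is a proof of the wrong theorem.

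Beyond the mismatch, be aware that Theorem~\ref{thm: Cliques in Kneser Partition} is not something one should expect to prove by an elementary reduction of this kind. It is a cited result of Alon, Frankl, and Lov\'asz~\cite{AFL}; when $k_1 = \cdots = k_t = 2$ it already yields the chromatic number of Kneser graphs, i.e.\ Lov\'asz's resolution of the Kneser conjecture~\cite{lov}, whose known proofs require topological machinery (Borsuk--Ulam type arguments). The paper itself does not reprove it, and no combination of classical Ramsey-type counting such as you employ can substitute for that input. If your goal is to supply a proof for this statement, the honest options are to cite~\cite{AFL} or to reproduce the topological argument; if your goal was Theorem~\ref{thm: R66}, your write-up is essentially the paper's proof with one step usefully sharpened.
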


Observe that when $k_1 = k_2 = \cdots = k_t =2$, Theorem~\ref{thm: Cliques in Kneser Partition} recovers the chromatic number of the Kneser graph.

\section{The Holmsen - Hrusak - Roldán-Pensado problem}\label{HRproblems}

We start with the proof of Theorem \ref{thm: beta}, showing that $7/3$ is an upper bound on $\beta$ in Problem \ref{probHRspecial2}.

{\em Proof of Theorem \ref{thm: beta}}. 
Let $V=[3k/2] \cup T$, where $T$ is a set of $5k/6$ vertices disjoint from $[3k/2]$. Then $|V|=7k/3$. Suppose we have a red/blue edge-coloring of the complete graph $G$ on ${V \choose k}$. 

Consider the triangle $S_1S_2S_3$ in $G$, where   
$S_1 = \{1,\dots,k\}$, 
$S_2 = \{1,\dots,k/2\} \cup \{k+1,\dots, 3k/2\}$, and 
$S_3 = \{k/2+1,\dots,3k/2\}$.
Since $S_1\cap S_2 \cap S_3=\emptyset$, if $S_1S_2S_3$ is monochromatic we are done. Otherwise, it must have a red edge and a blue edge. Without loss of generality, 
$S_1S_2$ is red, and
$S_1S_3$ is blue. 
Note that $S_1 \cap S_i  \subset [k]$ for all $i$.

Let $T_1,T_2,T_3$ be pairwise disjoint subsets of $\{k+1,\dots, 3k/2\}$  of size $k/6$ each. Let $F_i = T\cup T_i$ for $i\in [3]$.

Assume first that the triangle $F_1F_2F_3$ is monochromatic. If it is monochromatic blue, then consider the induced graph of $G$ on vertex set $\{S_1,S_2,F_1,F_2,F_3\}$. 
Observe that if either $S_1$ or $S_2$ has  at least two blue edges to vertices in $\{F_1,F_2,F_3\}$, then we get a blue triangle of the form $S_iF_jF_l$, and since 
$S_i\cap F_j \cap F_l = S_i \cap T = \emptyset$, 
 we are done.
 Otherwise, both $S_1$ and $S_2$ have at least two red edges to vertices in $\{F_1,F_2,F_3\}$, so by the pigeonhole principle we get a red triangle of the form $S_1S_2F_i$. But now 
$(S_1\cap S_2 \cap F_i) \subset ([k] \cap F_i)=  \emptyset$, 
and we are done.
If the triangle $F_1F_2F_3$ is monochromatic red, the argument is symmetric when we consider  instead  the induced graph on vertex set $\{S_1,S_3,F_1,F_2,F_3\}$.

So suppose the triangle $F_1F_2F_3$ is not monochromatic. Then without loss of generality $F_1F_2$ is a red edge and $F_1F_3$ is a blue edge. 

Consider the induced graph on vertex set 
$\{H_1, \dots, H_6\}$, where $H_i = [k-1] \cup \{k+i-1\}$ for $i\in [6]$. 
Since $R(3,3)=6$, this graph has a monochromatic triangle, say $H_1H_2H_3$. Note that $H_i\cap H_j = [k-1]$ for $i\neq j$. Moreover, since  $k/2\ge 6$ we have $H_i \cap T = \emptyset$ for all  $i\in [6]$.  

If the triangle $H_1H_2H_3$ is monochromatic blue, then consider the induced graph of $G$ on vertex set $\{F_1,F_2,H_1,H_2,H_3\}$. 
Observe that if $F_1$ or $F_2$ has  at least two blue edges to vertices in $\{H_1,H_2,H_3\}$, then we get a blue triangle of the form $F_iH_jH_l$, and since 
$F_i\cap H_j \cap H_l = F_i \cap [k-1] = \emptyset$, 
we are done.
 Otherwise, both $F_1$ and $F_2$ have at least two red edges to vertices in $\{H_1,H_2,H_3\}$, so by the pigeonhole principle we get a red triangle of the form $F_1F_2H_i$. But now 
$(F_1\cap F_2 \cap H_i) \subset (T \cap H_i)=  \emptyset$, 
and again we are done.

Finally, if the triangle $H_1H_2H_3$ is monochromatic red, we have a symmetric argument by considering  instead  the induced graph on vertex set $\{F_1,F_3,H_1,H_2,H_3\}$.
\qed

Our next goal is to prove Theorem \ref{thm: nalphan}, which gives a positive answer in Problem \ref{probHR} in the case $\alpha<1/4$ and $t=3$. In order to prove the theorem, we need the following auxiliary result.

\begin{thm}\label{thm: Alpha Cliques}
    Fix $s, t \geq 3$ and suppose $R(s - 1, t) \geq R(s, t - 1)$. Set $M = R(s - 1, t)$ and $m = R(s, t - 1)$. Let $\alpha$ be a rational number  so that $0< \alpha < \frac{1}{M+1} $. Then, there exists some integer $n$ such that $\alpha n$ is an integer and $\textrm{KG}(n, \alpha n)$ contains a red $K_s$ or a blue $K_t$.
\end{thm}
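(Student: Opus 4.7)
The plan is to recognize this statement as essentially the Ramsey bound $\rkn{r}{s}{t} \le (M+1)r + m - 1$ (i.e., Theorem~\ref{cor: ramsey Kneser cliques}) evaluated at $r = \alpha n$, and to prove that bound directly using Theorem~\ref{thm: Cliques in Kneser Partition}. With the bound in hand, the required inequality $n \ge (M+1)\alpha n + m - 1$ rearranges (via $1-(M+1)\alpha = (M+1)\varepsilon$) to $n \ge (m-1)/((M+1)\varepsilon)$, which is met by any sufficiently large $n$ with $\alpha n \in \mathbb{N}$; taking $\varepsilon$ rational and $n$ a sufficiently large multiple of the denominator of $\alpha$ handles the integrality requirement.

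Next I would prove the Ramsey bound. Fix $n = (M+1)r + m - 1$ and an arbitrary red/blue edge-coloring of $\textrm{KG}(n,r)$, and choose any vertex $v \in \binom{[n]}{r}$. The neighbors of $v$ in $\textrm{KG}(n,r)$ are exactly the $r$-subsets of $[n]\setminus v$, so I partition $\binom{[n]\setminus v}{r}$ into $F_1$ (those joined to $v$ by a red edge) and $F_2$ (those joined by a blue edge). Since $n-r = Mr + m - 1$ and $M \ge m$, the hypothesis of Theorem~\ref{thm: Cliques in Kneser Partition} is satisfied with $t = 2$, $k_1 = M$, $k_2 = m$. Applying it yields either $M = R(s-1,t)$ pairwise disjoint sets in $F_1$, forming a $K_M$-clique in $\textrm{KG}(n,r)$ all of whose edges to $v$ are red---in which case the classical Ramsey theorem applied to this $K_M$ gives a red $K_{s-1}$ (closing with $v$ to a red $K_s$) or a blue $K_t$---or symmetrically $m = R(s,t-1)$ pairwise disjoint sets in $F_2$, producing a red $K_s$ outright or a blue $K_{t-1}$ that closes with $v$ to a blue $K_t$.

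The hard part is really just bookkeeping: verifying the boundary arithmetic $n - r = Mr + m - 1$ so that Theorem~\ref{thm: Cliques in Kneser Partition} applies with equality, and choosing $n$ so that $\alpha n \in \mathbb{N}$. The assumption $M \ge m$ is used precisely so that $k_1 = M \ge k_2 = m$ matches the non-increasing requirement of the Alon--Frankl--Lov\'asz theorem; this ensures the larger Ramsey number is aligned with the red (i.e., $F_1$) side, so that each of the two cases closes through $v$ to the correct monochromatic clique.
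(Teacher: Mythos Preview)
Your proposal is correct and follows essentially the same approach as the paper: pick a vertex $v$, split its neighborhood $\binom{[n]\setminus v}{r}$ into red and blue neighbors, and apply the Alon--Frankl--Lov\'asz theorem with $k_1=M$, $k_2=m$ to force a $K_M$ among red neighbors or a $K_m$ among blue neighbors, then close through $v$. The only cosmetic difference is that the paper phrases the first step as a contradiction (no red $K_s$ or blue $K_t$ implies no large clique in either neighborhood) and, for the second step, fixes $r$ first and sets $n=r/\alpha$, whereas you fix $n$ and verify $n\ge (m-1)/((M+1)\varepsilon)$; these are the same computation, and your remark that the integrality condition $\alpha n\in\mathbb{N}$ tacitly forces $\alpha$ rational is an issue the paper's proof also glosses over.
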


\begin{proof}
    Fix an integer $r \geq 2$ and let $N = r(M+1) + m - 1$. We first show that in any red/blue coloring of $G= \textrm{KG}(N, r)$, there is a red $K_s$ or a blue $K_t$. Indeed, assume for contradiction that there is some coloring $c: E(G) \to \{R, B\}$ of $G$  with no red $K_s$ or blue $K_t$. Let $v = [N] \setminus [N - r] \in V(G)$. Then,  $\binom{[N-r]}{r} = \binom{[Mr + m - 1]}{r}$ can be partitioned as $N_R(v) \cup N_B(v)$, where $N_R(v)$ and $N_B(v)$ are the red and blue neighborhoods of $v$, respectively. Since $c$ contains no red $K_s$ and no blue $K_t$, then $G[N_R(v)]$ contains no red $K_{s - 1}$ and no blue $K_t$, and similarly, $G[N_B(v)]$ contains no red $K_s$ and no blue $K_{t - 1}$. In particular, $G[N_R(v)]$ contains no $K_M$ and $G[N_B(v)]$ contains no $K_m$. However, by Theorem~\ref{thm: Cliques in Kneser Partition}, setting $F_1 = N_R(v)$ and $F_2 = N_B(v)$, we have that either $N_R(v)$ contains a clique of size $M$ or $N_B(v)$ contains a clique of size $m$, a contradiction.

    Now, let $\alpha$ be a rational number  so that $0< \alpha < \frac{1}{M+1}$. Let 
    $r$ be a positive integer  so that $\frac{r}{(M+1)r + m - 1} \geq \alpha$ and $\frac{r}{\alpha}$ is an integer.  Let $N = r+ Mr + m - 1$ and $n =  \frac{r}{\alpha}$. By the choice of $r$ we have $n\ge N$. Therefore, $\textrm{KG}(N, r) \subseteq \textrm{KG}(n, \alpha n)$, and thus any red-blue coloring of $\textrm{KG}(n, \alpha n)$ contains a red $K_s$ or a blue $K_t$. 
\end{proof}

Note that the general bound on $\rkn{r}{s}{t}$ in Theorem \ref{cor: ramsey Kneser cliques} is proven in the first paragraph of the proof of Theorem~\ref{thm: Alpha Cliques}.

We are now ready to prove Theorem \ref{thm: nalphan}. 

\begin{proof}[Proof of Theorem \ref{thm: nalphan}]

We apply Theorem~\ref{thm: Alpha Cliques} with $s=t=3$. This gives $M = m = R(2,3)=3$. Let $\alpha$ be a rational number satisfying $0 < \alpha< \frac{1}{4}$. Then $\alpha < \frac{1}{M+1}$, and  thus  by Theorem~\ref{thm: Alpha Cliques} there exists some integer $n$ such that $\alpha n$ is an integer  and any red/blue coloring of the edges of $\textrm{KG}(n, \alpha n)$ contains a monochromatic triangle.
\end{proof}

\section{General bounds on Kneser Ramsey numbers}\label{sec:bounds}

In this section we prove Theorems \ref{thm: upper bound s, s-1}, \ref{thm:  upper bound r,3,t},  \ref{thm: 3_3_lower_bound}, and \ref{thm: computational},  and Proposition \ref{thm: chromatic number bound}. 

\subsection{Upper bounds}
The proofs of both Theorem \ref{thm: upper bound s, s-1} and Theorem \ref{thm: upper bound r,3,t} follow the same scheme, described as follows. For some integers $N,r,s,t$, let $G = \textrm{KG}(N, r)$ and let $c: E(G) \to \{R, B\}$ be a red/blue edge-coloring of $G$. Assume that $c$ has  no red $K_{s}$ or blue $K_t$, and suppose there exists a blue edge in $c$. Then (by renaming the integers in $[N]$) we may assume without loss of generality that $c(uv) = B$,  where $u = \{N, N - 1, \dots, N - r + 1\} \in V(G)$ and $v = \{N-r, N-r-1, \dots, N - 2r + 1\} \in V(G)$. Note that  $G[N_G(u)] \cap G[N_G(v)]= \textrm{KG}(N-2r, r)$. 
    
    We now partition the set of vertices ${[N-2r]\choose r}$ of $G'=\textrm{KG}(N-2r, r)$ into four sets $N_{XY}$, where $X,Y \in \{R,B\}$, where $N_{XY}$ is the set of vertices $w\in {[N-2r]\choose r}$ such that $c(wu) = X$ and $c(wv) = Y$.

    Observe that $N_{BR}$ contains no red $K_{s-1}$, for otherwise we  have  a red $K_{s}$ together with $v$, and additionally, $N_{BR}$  contains no blue $K_{t-1}$, for otherwise have a blue $K_{t}$ together with $u$. Thus $$\omega(G'[N_{BR}]) \leq R(s-1, t-1) -1$$ where $\omega(H)$ is the clique number of the graph $H$. 
    Similarly, $N_{RR} \cup N_{RB}$  contains no red $K_{s-1}$ or blue $K_t$, so $$\omega(G'[N_{RR} \cup N_{RB}]) \leq R(s-1,t)-1.$$ Finally  $N_{BB}$  contains no red $K_{s}$ and no blue $K_{t-2}$, as otherwise we can find a blue $K_t$ together with the edge $uv$. Hence $$\omega(G'[N_{BB}]) \leq R(s,t-2)-1.$$ 

Now we would like to apply Theorem \ref{thm: Cliques in Kneser Partition}  to conclude that at least one of the above three induced subgraph of $G'$ contains a large clique, which will constitute a contradiction. 
 But in order to do that, we need to compare the sizes of the Ramsey numbers $R(s-1, t-1)$, $R(s-1,t)$, and $R(s,t-2)$, which we do not know how to do in general. However, in the two special cases where $s$ and $t$ differ by 1 (Theorem \ref{thm: upper bound s, s-1}) or when $s=3$ (Theorem \ref{thm: upper bound r,3,t}) we can find the largest value among the three Ramsey numbers, which allows us to apply Theorem  \ref{thm: Cliques in Kneser Partition}.

\begin{proof}[Proof of Theorem \ref{thm: upper bound s, s-1}] 
We would like to show that for $s \geq 4$, 
\[\rkn{r}{s-1}{s} \leq rR(s-2,s) + 2(R(s-2,s-1)-1) + 2r. \]

    Let $N = rR(s-2,s) + 2(R(s-2,s-1)-1) + 2r$, $G = \textrm{KG}(N, r)$ and let $c: E(G) \to \{R, B\}$ be a red/blue edge-coloring of $G$. Assume for contradiction that $c$ has  no red $K_{s-1}$ or blue $K_s$. Since  $$|V(G)| = {N \choose r} \ge N > rR(s-2,s)=rR(s,s-2),$$ there is a clique of size $R(s,s-2)$ in $G$, so if $c$ has no blue edge it must have a red $K_s$. So we may assume some edge is colored blue. Without loss of generality, suppose $c(uv) = B$,  where $u = \{N, N - 1, \dots, N - r + 1\} \in V(G)$ and $v = \{N-r, N-r-1, \dots, N - 2r + 1\} \in V(G)$. Then  $G[N_G(u)] \cap G[N_G(v)]= \textrm{KG}(N-2r, r)$. Let $G'= \textrm{KG}(N-2r, r) $.

    As explained above, we have (for $(s,t)\leftarrow (s-1,s)$)
\begin{equation}
    \begin{split}
        \omega(G'[N_{BR}]) &\leq R(s-2, s-1) -1, \\
        \omega(G'[N_{RR} \cup N_{RB}]) &\leq R(s-2,s)-1, \\
        \omega(G'[N_{BB}]) &\leq R(s-1,s-2)-1 = R(s-2, s-1)-1.
    \end{split}
\end{equation}

Clearly, we have $R(s-2,s) \ge R(s-2,s-1)$,  and thus we can 
 invoke Theorem~\ref{thm: Cliques in Kneser Partition} with $F_1 = N_{RR} \cup N_{RB}$, $F_2 = N_{BR}$ and $F_3 = N_{BB}$, and with $k_1 = R(s-2,s),$ $k_2 = R(s-2, s-1),$  $k_3 = R(s-2,s-1)$. Since $$k_1\cdot r + (k_2 - 1) + (k_3 - 1) = N-2r,$$ Theorem \ref{thm: Cliques in Kneser Partition} entails that either $F_1$ contains a clique on $R(s-2,s)$ vertices, or $F_2$ contains a clique on $R(s-2, s-1)$ vertices, or $F_3$ contains a clique on $R(s-2, s-1)$ vertices, contradicting (1). 
\end{proof}

\begin{proof}[Proof of Theorem \ref{thm: upper bound r,3,t}]
 We now apply our proof scheme with $(s,t)\leftarrow (3,t)$.
 We would like to show that for  $t \geq 5$, 
\[\rkn{r}{3}{t} \leq rR(3,t-2) + 2r+2t-3. \] 
Let $N = rR(3,t-2) + 2r+2t-3$ and $G = \textrm{KG}(N, r)$. Let $c: E(G) \to \{R, B\}$ be a red/blue edge-coloring of $G$. Assume for contradiction that $c$ has  no red $K_{3}$ nor blue $K_t$. Like before, since  $N > rR(3,t-2)$,  if $c$ has no blue edge, then it must have  a red $K_3$. So we may assume some edge is colored blue, and without loss of generality, suppose $c(uv) = B$,  where $u$ and $v$ as before. Then  $G[N_G(u)] \cap G[N_G(v)]= \textrm{KG}(N-2r, r)$. Let $G'=\textrm{KG}(N-2r, r)$.
    
Now we have 
\begin{equation}
    \begin{split}
        \omega(G'[N_{BR}]) &\leq R(2, t-1) -1 = t-2, \\
        \omega(G'[N_{RR} \cup N_{RB}]) &\leq R(2,t)-1=t-1, \\
        \omega(G'[N_{BB}]) &\leq R(3,t-2)-1.
    \end{split}
\end{equation}

It is straightforward to see  that $R(3,t-2) > 2(t-3)$ (indeed, take two disjoint cliques on $t-3$ vertices each, and color their edges blue; color the edges between them red; then the red graph is bipartite so contains no triangles). This implies that 
 for $t\ge 5$ we have $R(3,t-2) \ge t$.
 Therefore,  applying  Theorem \ref{thm: Cliques in Kneser Partition} with $F_1 = N_{RR} \cup N_{RB}$, $k_1 = R(3,t-2)$, $F_2 = N_{BR}$, $k_2 = t$,  $F_3 = N_{BB}$, and $k_3 = t-1$, we have $$N - 2r \geq r(R(3,t-2)) + (t-1 ) + (t-1 - 1),$$ implying that for some $i\in [3]$,  $F_i$ contains a clique of size $k_i$, contradicting (2). 
\end{proof}

\subsection{Lower bounds}

For the lower bound in Proposition \ref{thm: chromatic number bound} we employ the chromatic number of Kneser graphs. It is easy to verify that $\chi(\textrm{KG}(n,r)) \ge n - 2r + 2$ by exhibiting a explicit proper coloring, and this is the bound we need for the proof. 
Lov\'asz \cite{lov} proved that $\chi(\textrm{KG}(n,r)) = n - 2r + 2$, which was one of the earliest applications of topological methods to combinatorics.

\begin{proof}[Proof of Proposition~\ref{thm: chromatic number bound}]
 For   $r \geq 1$ and $s,t \geq 2$, we wish to show $\rkn{r}{s}{t} \geq R(s, t) + 2r - 2.$
Let $n = \rkn{r}{s}{t}$ and set $G= \textrm{KG}(n,r)$. Assume for contradiction that $R(s,t) > n-2r+2 = \chi(G)$ and let $c:V(G) \to [n - 2r + 2]$ be a proper vertex-coloring of $G$. Consider the complete graph $G'$ on vertex set  $[n - 2r + 2]$ and let $c':E(G') \to \{R, B\}$ be a red/blue edge-coloring of $G'$ avoiding a red copy of $K_s$ and a blue copy of $K_t$. Note that $c'$ exists because of our negation assumption.

    Now, define a red/blue edge-coloring of $f:E(G)\to \{R, B\}$ by
     $$f(uv)=c'(c(u), c(v))$$ for  $uv \in E(G)$. 
     We claim that $f$ has no red copy of $K_s$ or blue copy of $K_t$. Suppose there is a red copy of $K_s$ with vertices $\{v_1, v_2, \dots, v_s\}$. Then for $i \neq j$ we have  $$f(v_iv_j) = c'(c(v_i), c(v_j)) = R.$$ Observe that when   $i \neq j$, $c(v_i) \neq c(v_j)$ because $c$ is a proper vertex-coloring of $G$ and $v_iv_j \in E(G)$. This implies that the subgraph of $G'$ induced on the vertices $\{c(v_1), c(v_2), \dots, c(v_s)\}$ is a red copy of $K_s$, a contradiction to our choice of $c'$. A similar argument shows that $f$ has no blue copy of $K_t$.
\end{proof}

\begin{proof}[Proof of Theorem \ref{thm: 3_3_lower_bound}]
    We wish to show there exists a red/blue coloring $c$ of $G = \textrm{KG}(3r + 1, r)$ with no monochromatic triangle. To this end, we partition $V(G)$ as follows:
    \begin{align*}
        X &= \{x \in V(G) : 3r + 1 \in x\}, \\
        Y &= \{x \in V(G) \setminus X : 3r \in x\}, \\
        Z &= V(G) - Y - X \simeq V(\textrm{KG}(3r - 1, r)).
    \end{align*}
    Observe that  the induced subgraph $G[Z]$ of $G$ on vertex set $Z$ is triangle-free. Also, $X$ and $Y$ are independent sets with no triangles contained in $G[X \cup Y]$. Therefore, the only triangles will have either  two vertices from $Z$ and one from $X \cup Y$ or  one vertex from each of $X, Y, Z$. Setting $c(E(G[Z])) = B$ and $c(E(X, Y)) = B$ with all other edges colored $R$ (see Figure \ref{fig: 3_3_lower_bound}), we obtain a red/blue coloring with no monochromatic triangles. 
\end{proof}

    \begin{figure}[h!]
    \centering
        \begin{tikzpicture}[every node/.style={font=\small}]
        \node[draw, ellipse, minimum width=1cm, minimum height=2cm, label = left: $X$] (X) at (-2, 1) {};
        \node[draw, ellipse, minimum width=1cm, minimum height=2cm, label = right: $Y$] (Y) at (2, 1) {};
        \node[draw, ellipse, minimum width=2cm, minimum height=1cm, label = below: $Z$] (Z) at (0,-2) {};
    
        \foreach \yX in {0.2, 0.6, 1.0, 1.4, 1.8}{
            \foreach \yY in {0.2, 0.6, 1.0, 1.4, 1.8}{
                \draw[thin, blue] (-1.7,\yX) -- (7.7-6,\yY);
            }
        }
        
        \foreach \yX in {0.2, 0.6, 1.0, 1.4, 1.8}{
            \foreach \xZ in {-0.4, 0, 0.4}{
                \draw[thin, red] (-1.7,\yX) -- (\xZ, -2);
            }
        }

        \foreach \yY in {0.2, 0.6, 1.0, 1.4, 1.8}{
            \foreach \xZ in {-0.4, 0, 0.4}{
                \draw[thin, red] (1.7,\yY) -- (\xZ, -2);
            }
        }
    
        \foreach \xA in {-0.5, -0.3, -0.1, 0.1, 0.3, 0.5}{
            \foreach \xB in {-0.5, -0.3, -0.1, 0.1, 0.3, 0.5}{
                \draw[thin, blue] (\xA, -2.25) -- (\xB, -1.75);
            }
        }
        
    \end{tikzpicture}
    \caption{The coloring in \cref{thm: 3_3_lower_bound}}
    \label{fig: 3_3_lower_bound}
\end{figure}

\subsection{Computationally-obtained  bounds}\label{sec:exact}

To obtain the upper and lower bounds in Theorem \ref{thm: computational} we wrote a program in Python, which for   (small enough) specified parameters $n,r,s,t$, runs over all possible red/blue edge-colorings of $\textrm{KG}(n,r)$ and either provides as output a coloring that contains no red $K_s$ and no blue $K_t$ (which we call ``good"), or declares that a good coloring does not exists. 

We do this by reformulating the question as a satisfiability problem. For example, for $\rkn{r}{3}{3}$, we look for a red/blue edge-coloring $c$ of $G = \textrm{KG}(n, r)$ with no monochromatic triangle. Suppose $E(G) = \{e_1, \dots, e_{|E(G)|}\}$. For $1 \leq i \leq |E(G)|$, we may assign a boolean variable $v_i$ corresponding to $e_i$, where 
\[
v_i = 
\begin{cases}
    1 \text{ (True)} & \text{if } c(e_i) = R, \\
    0 \text{ (False)} & \text{if } c(e_i) = B.
\end{cases}
\]

Then, for a triangle $T = \{e_1, e_2, e_3\}$ of $G$, $T$ is monochromatic if and only if the following boolean clause is true:
$$(v_1 \land v_2 \land v_3) \lor (\neg v_1 \land \neg v_2 \land \neg v_3).$$

Negating this, we get that $T$ is not monochromatic if and only if the following boolean clause, $B_T$, is true:
$$B_T=(v_1 \lor v_2 \lor v_3) \land (\neg v_1 \lor \neg v_2 \lor \neg v_3).$$

Let $\{T_1, \dots, T_{t}\}$ be the set of all triangles in $G$. Then, there is a good edge-coloring of $G$ if and only if there exists an assignment  $(v_1, \dots, v_{|E(G)|}) \in \{0,1\}^{|E(G)|}$ such that the   clause $B=\bigwedge_{i = 1}^t B_{T_i}$ is satisfiable (True).

Our program builds the clause $B$ and then checks its  satisfiability.

The proof of Theorem \ref{thm: computational} for $\rkn{2}{3}{3}$ follows because our program found a good coloring of $\textrm{KG}(8,2)$, but declared that the graph $\textrm{KG}(9,2)$ does not have a good coloring. The bounds for the other Kneser Ramsey numbers in Theorem \ref{thm: computational} were obtained in similar ways.

The explicit colorings of $\textrm{KG}(8,2)$ with no monochromatic $K_3$, and of $\textrm{KG}(11,2)$ with no red $K_4$ or blue $K_3$ are given in Appendix A. Our code is given in Appendix B.

\section{Pálvölgyi's problem}\label{sec:induced}

\subsection{Induced Kneser graphs }
We start with a discussion on induced graphs of the Kneser graph. Recall that given integer $n\ge s\ge r$, we defined an {\em induced} $\textrm{KG}(s,r)$ as a subgraph $H$ of the graph $\textrm{KG}(n,r)$ induced on a vertex set ${S\choose r}$, where $S \subset [n]$ is a ground set of size $s$. 

We first show that  
 every induced graph $H$ of $\textrm{KG}(n,r)$  {\em isomorphic} to $\textrm{KG}(2r,r)$ is, in fact, an induced $\textrm{KG}(2r,r)$: 

\begin{prop}\label{prop: set systems}
   Suppose  $H$ is an induced subgraph of $G=\textrm{KG}(n,r)$ isomorphic to $\textrm{KG}(2r,r)$. Then $H$ is an induced $\textrm{KG}(2r,r)$,  that is, there exists a ground set $S\subset [n]$ of size $2r$ such that $H=G[{S\choose r}]$.
\end{prop}

 The proof will follow  from the set-system theorem of Bollob\'as.
\begin{thm}[Bollob\'as \cite{bol}]\label{lem: two_families}
    Let $(A_1, B_1), \dots, (A_m, B_m)$ be  pairs of sets with $|A_i| = a$ and $|B_i| = b$ for every $i$.  Suppose that 
   $A_i \cap B_i = \emptyset$ for $1 \leq i \leq m$, and
    $A_i \cap B_j \neq \emptyset$ for $i \neq j$.
    Then $m \leq \binom{a + b}{a}$.  Furthermore, if $m = \binom{a + b}{a}$ then there is some set $S$ of cardinality $a + b$ such that the sets $A_i$ are all the subsets of $S$ of size $a$, and $B_i = S \setminus A_i$ for all $i$.
\end{thm}

\begin{proof}[Proof of Proposition \ref{prop: set systems}]
 Observe that $\textrm{KG}(2r,r)$ is a matching of size $\frac{1}{2} \binom{2r}{r}$. 
 We claim that every induced matching in $\textrm{KG}(n,r)$ of size  $\frac{1}{2} \binom{2r}{r}$ has vertex set $\binom{S}{r}$, for some $S\subset [n]$ and $ |S|= 2r$.
Indeed, let $M = \{ (X_i, Y_i) \colon 1 \leq i \leq m \}$ be an induced matching of $\textrm{KG}(n,r)$. Then   $X_i \cap Y_i = \emptyset$ for all $i\in [m]$, and $X_i \cap Y_j \neq \emptyset$ for all $i \neq j$. 

Set 
\[ A_i=  \left\{
\begin{array}{ll}
      X_i & 1 \leq i \leq m \\
      Y_{i-m} & m+1 \leq i \leq 2m, \\
\end{array} 
\right. \]
and 
\[ B_i=  \left\{
\begin{array}{ll}
      Y_i & 1 \leq i \leq m \\
      X_{i-m} & m+1 \leq i \leq 2m. \\
\end{array} 
\right. 
\]

 Then the set pair system $\{(A_i, B_i) \mid 1 \leq i \leq 2m\}$ satisfies the conditions of Theorem \ref{lem: two_families}. Therefore, by the theorem,
    $$2 |M| = 2m \leq \binom{2r}{r},   $$ and moreover, equality is achieved when the sets $A_i$ are all the $r$ subsets of the set  $S=A_1\cup B_1$, where $B_i = S \setminus A_i$ for all $i$.
    This implies that $H$ is a induced $KG(2r,r)$ on ground set $S$, as claimed.  
\end{proof}

We return now to Pálvölgyi's problem. In Problem~\ref{prob:Domotor} one would like to find a red/blue edge-coloring of $G=\textrm{KG}(n,r)$ with no red  induced   $\textrm{KG}(3r,r)$  and no blue triangle. 
We first show the existence of a red/blue edge-coloring of $G$ with no  monochromatic induced Kneser graphs $\textrm{KG}(s,r)$ for all  $n\ge s\ge 2r$.

\begin{prop}\label{prop: interlacing}
    Let $n\ge 2r\ge 4$ be integers. Then there exists a red/blue edge-coloring of $\textrm{KG}(n,r)$ with no monochromatic  induced   $\textrm{KG}(s,r)$ for all  $n\ge s\ge 2r$. 
\end{prop} 
\begin{proof}
 We say that an edge $XY \in \textrm{KG}(n,r)$ is interlacing, where $X,Y \in {[n]\choose r}$ are disjoint sets, if  there exist $x,x'\in X$ and  $y,y'\in Y$ such that either $y<x<y'$ or  $x<y<x'$. Otherwise, we say that $XY$ is non-interlacing. Now color the interlacing edges of $\textrm{KG}(n,r)$ red, and the non-interlacing edges blue. 
 
 We claim that in this coloring there is no monochromatic $\textrm{KG}(s,r)$ for any  $n\ge s\ge 2r$. Indeed, 
suppose  $H$ is an induced  $\textrm{KG}(s,r)$ on ground set $S=\{z_1< z_2 <\dots <z_s\}\subset [n]$ where $s\ge 2r\ge 4$. 
 Then the edge $XY \in H$, where $X=\{z_1, \dots z_r\}$  and $Y=\{z_{r+1}, \dots z_{2r}\}$ is non-interlacing, while the edge $X'Y' \in H$, where $X'=\{z_2, \dots z_{r+1}\}$  and $Y'=\{z_{1},z_{r+2} \dots z_{2r}\}$ is interlacing, showing that $H$ is not monochromatic.  
\end{proof}

The proof shows that an induced $\textrm{KG}(s,r)$ is not monochromatic because it contains an induced $\textrm{KG}(2r,r)$, which is already not monochromatic under the edge coloring in the proof. 
Therefore, by Proposition \ref{prop: set systems}  we can state Proposition \ref{prop: interlacing} in a slightly 
 stronger manner: 
 \begin{cor}\label{prop: interlacing2}
    Let $n\ge 2r\ge 4$ be integers. There exists a red/blue edge-coloring of $G=\textrm{KG}(n,r)$ such that for all  $n\ge s\ge 2r$, if  $H$ is an induced subgraph of $G$ isomorphic to 
 $\textrm{KG}(s,r)$, then $H$ is not monochromatic. 
\end{cor}

Note that Problem \ref{prob:Domotor} is not resolved by Corollary \ref{prop: interlacing2} because a triangle is an induced Kneser graph only if $r=1$. In the next subsection we prove Theorem \ref{thm: R66}, which gives a lower bound for Problem \ref{prob:Domotor}.

\subsection{Proof of Theorem~\ref{thm: R66}}
Set $n = R_r(3r, 3r) - 1$, where  $R_r(3r, 3r)$ is the minimum integer $m$ such that in any red/blue coloring of the edges of the complete $r$-uniform hypergraph on $m$ vertices $K^r_m$ there is a monochromatic $K^r_{3r}$. 
We would like to show that there is a red/blue coloring $c$ of $G=\textrm{KG}(n,r)$ containing no red induced $\textrm{KG}(3r,r)$ and no blue triangle.

 Let $c'$ be a red/blue edge coloring of the graph $G' = K^r_{n}$ containing no monochromatic $K^r_{3r}$ ($c'$ exists by the choice of $n$). We partition $E(G') = V(G)$ into two sets $P_1, P_2$ as follows: for $e \in E(G')$, if $c'(e) = R$,  then $e\in P_1$. Otherwise,  $e\in P_2$.

Observe that by the choice of $c'$, neither $G[P_1]$ nor $G[P_2]$ contain an induced copy of $\textrm{KG}(3r,r)$. Indeed, if  $G[P_i]$ contains an induced copy of $\textrm{KG}(3r,r)$, then there exists $S\subset [n]$ of size $3r$ such that ${S\choose r} \subset P_i$. But this implies that $c'$ contains a monochromatic copy of $K^r_{3r}$ on vertex set $S$.

We now define $c:E(G) \to \{R,B\}$ as follows. 
For any edge $e \in G[P_R]$ or $e\in G[P_B]$, set $c(e) = R$. Otherwise, set $c(e) = B$. Since the blue subgraph is a bipartite graph, it is triangle-free. Hence $c$ is a red/blue edge coloring of $G$ containing no red induced $\textrm{KG}(3r,r)$ and no blue triangle. 
\qed

\section{Concluding remark}
The {\em multicolor Ramsey number} $R(s_1,s_2, \dots, s_t)$ is the minimum integer $n$ such that in any $t$-coloring of the edges of $K_n$, there exists a color-$i$ copy of $K_{s_i}$ for some $i \in [t]$. We can define the multicolor $r$-Kneser Ramsey number $\rkn{r}{s_1, \dots}{s_t}$ analogously.

While we do not consider multicolor Kneser Ramsey numbers in depth in this paper, by similar arguments as in the proof of Theorem~\ref{thm: Alpha Cliques}, we  obtain an upper bound on these numbers. 

\begin{thm}
    Fix $s_1, s_2, \dots, s_t \geq 3$ and for $1 \leq i \leq t$, set $m_i = R(s_1, s_2, \dots, s_i - 1, \dots, s_t)$. Suppose $m_1 \geq m_j$ for all $j \geq 2$. Then,
    $$R^{\textrm{KG}}_{r}(s_1, \dots, s_t) \leq (m_1 + 1)r + \sum_{i = 2}^t (m_i - 1).$$
\end{thm}

\bigskip

\appendix

\section{Appendix: Explicit colorings.}\label{Apen: colorings}

\subsection{A good coloring of $\textrm{KG}(8,2)$}
Here we provide a coloring of $\textrm{KG}(8,2)$ with no monochromatic $K_3$. 
This shows $\rkn{2}{3}{3} \ge 9$.

We can describe part of this coloring as follows. First, we let $A = \binom{[3]}{2}$, $B = \binom{[8] - [3]}{2}$, and $C = V(G) - (A \cup B)$. Then all the edges from $A$ to $C$ as well as all the edges within $B$ are red, and all the edges from $A$ to $B$ are blue.

The red edges are:

Edges from $A$ to $C$:

$((1, 2), (3, 4)), ((1, 2), (3, 5)), ((1, 2), (3, 6)), ((1, 2), (3, 7)),((1, 2), (3, 8)), ((1, 3), (2, 4)),\\ 
((1, 3), (2, 5)), ((1, 3), (2, 6)),((1, 3), (2, 7)), ((1, 3), (2, 8)), ((1, 4), (2, 3)), ((1, 5), (2, 3)),\\
((1, 6), (2, 3)), ((1, 7), (2, 3)),((1, 8), (2, 3))$

Edges from $B$ to $C$:

$((1, 4), (5, 6)),((1, 4), (6, 8)),  ((1, 5), (6, 7)), ((1, 5), (6, 8)), ((1, 5), (7, 8)), ((1, 6), (4, 5)), \\
((1, 6), (5, 7)),((1, 6), (5, 8)), ((1, 7), (4, 8)), ((1, 7), (5, 8)), ((1, 8), (4, 6)), ((1, 8), (4, 7)), \\
((1, 8), (6, 7)),
((2, 4), (5, 6)), ((2, 4), (6, 8)), ((2, 5), (6, 7)), ((2, 5), (6, 8)),((2, 5), (7, 8)), \\
((2, 6), (4, 5)),((2, 6), (5, 7)), ((2, 6), (5, 8)), ((2, 7), (4, 5)), ((2, 7), (4, 8)),
((2, 7), (5, 8)), \\
((2, 8), (4, 5)), ((2, 8), (4, 6)), ((2, 8), (4, 7)),
((3, 4), (5, 6)), ((3, 4), (6, 8)), ((3, 5), (6, 7)), \\
((3, 5), (6, 8)),((3, 5), (7, 8)), ((3, 6), (4, 5)),
((3, 6), (5, 7)), ((3, 6), (5, 8)),((3, 7), (4, 8)), \\ ((3, 7), (5, 8)), ((3, 8), (4, 6)), ((3, 8), (4, 7)),
((3, 8), (6, 7))
$

Edges within $B$:

$((4, 5), (6, 7)), ((4, 5), (6, 8)), ((4, 5), (7, 8)),((4, 6), (5, 7)), ((4, 6), (5, 8)),
((4, 6), (7, 8)), \\ ((4, 7), (5, 6)),((4, 7), (5, 8)), ((4, 7), (6, 8)), ((4, 8), (5, 6)), ((4, 8), (5, 7)),
((4, 8), (6, 7)), \\ ((5, 6), (7, 8)), ((5, 7), (6, 8)), ((5, 8), (6, 7))$

Edges within $C$:

$
  ((1, 4), (2, 6)),
((1, 4), (2, 8)), ((1, 4), (3, 6)), ((1, 4), (3, 8)),  
((1, 5), (2, 6)), ((1, 5), (2, 7)), \\
((1, 5), (3, 6)), ((1, 5), (3, 7)),
  ((1, 6), (2, 4)), ((1, 6), (2, 5)),((1, 6), (3, 4)), ((1, 6), (3, 5)),\\
  ((1, 7), (2, 5)), ((1, 7), (2, 8)),
((1, 7), (3, 5)), ((1, 7), (3, 8)),  ((1, 8), (2, 4)),
((1, 8), (2, 7)), \\
((1, 8), (3, 4)),((1, 8), (3, 7)), 
((2, 4), (3, 6)), ((2, 4), (3, 8)), ((2, 5), (3, 6)), ((2, 5), (3, 7)),\\
((2, 6), (3, 4)), ((2, 6), (3, 5)), 
 ((2, 7), (3, 5)), ((2, 7), (3, 8)),
 ((2, 8), (3, 4)),((2, 8), (3, 7)) $

The remaining edges are blue. \\

\subsection{A good coloring of $\textrm{KG}(11,2)$}
Below is a coloring of $\textrm{KG}(11,2)$ with no red $K_4$ or blue $K_3$. This shows $\rkn{2}{4}{3} \ge 12$.

The red edges are:

$((1, 2), (3, 5)), ((1, 2), (3, 9)), ((1, 2), (3, 10)), ((1, 2), (4, 5)), ((1, 2), (4, 10)), ((1, 2), (5, 6)), ((1, 2), (5, 7))\\
((1, 2), (5, 8)), ((1, 2), (5, 9)), ((1, 2), (5, 10)), ((1, 2), (5, 11)), ((1, 2), (6, 10)), ((1, 2), (8, 10)), ((1, 2), (9, 10))\\
((1, 2), (10, 11)), ((1, 3), (2, 7)), ((1, 3), (2, 9)), ((1, 3), (4, 9)), ((1, 3), (5, 9)), ((1, 3), (6, 7)), ((1, 3), (6, 9))\\
((1, 3), (7, 8)), ((1, 3), (7, 9)), ((1, 3), (7, 11)), ((1, 3), (8, 9)), ((1, 3), (9, 11)), ((1, 4), (2, 5)), ((1, 4), (2, 8))\\
((1, 4), (2, 11)), ((1, 4), (3, 9)), ((1, 4), (3, 11)), ((1, 4), (5, 6)), ((1, 4), (5, 8)), ((1, 4), (5, 9)), ((1, 4), (5, 11))\\
((1, 4), (6, 8)), ((1, 4), (6, 11)), ((1, 4), (8, 9)), ((1, 4), (8, 11)), ((1, 4), (9, 11)), ((1, 5), (2, 3)), ((1, 5), (2, 4))\\
((1, 5), (2, 6)), ((1, 5), (2, 8)), ((1, 5), (2, 10)), ((1, 5), (2, 11)), ((1, 5), (3, 6)), ((1, 5), (3, 8)), ((1, 5), (3, 9))\\
((1, 5), (3, 11)), ((1, 5), (4, 6)), ((1, 5), (4, 8)), ((1, 5), (4, 11)), ((1, 6), (2, 3)), ((1, 6), (2, 5)), ((1, 6), (2, 10))\\
((1, 6), (3, 5)), ((1, 6), (3, 7)), ((1, 6), (3, 9)), ((1, 6), (4, 5)), ((1, 6), (4, 10)), ((1, 6), (5, 8)), ((1, 6), (5, 9))\\
((1, 6), (5, 10)), ((1, 6), (5, 11)), ((1, 6), (8, 10)), ((1, 6), (9, 10)), ((1, 6), (10, 11)), ((1, 7), (2, 3)), ((1, 7), (2, 4))\\
((1, 7), (2, 6)), ((1, 7), (2, 8)), ((1, 7), (2, 11)), ((1, 7), (3, 4)), ((1, 7), (3, 6)), ((1, 7), (3, 8)), ((1, 7), (3, 9))\\
((1, 7), (3, 11)), ((1, 7), (4, 6)), ((1, 7), (4, 8)), ((1, 7), (4, 11)), ((1, 7), (6, 8)), ((1, 7), (6, 11)), ((1, 7), (8, 11))\\
((1, 8), (2, 5)), ((1, 8), (2, 10)), ((1, 8), (3, 5)), ((1, 8), (3, 6)), ((1, 8), (3, 9)), ((1, 8), (4, 5)), ((1, 8), (5, 6))\\
((1, 8), (5, 9)), ((1, 8), (5, 10)), ((1, 8), (5, 11)), ((1, 8), (6, 10)), ((1, 8), (9, 10)), ((1, 8), (10, 11)), ((1, 9), (2, 3))\\
((1, 9), (2, 10)), ((1, 9), (3, 4)), ((1, 9), (3, 5)), ((1, 9), (3, 6)), ((1, 9), (3, 7)), ((1, 9), (3, 8)), ((1, 9), (3, 10))\\
((1, 9), (3, 11)), ((1, 9), (4, 10)), ((1, 9), (5, 8)), ((1, 9), (5, 11)), ((1, 9), (6, 10)), ((1, 9), (8, 10)), ((1, 9), (10, 11))\\
((1, 10), (2, 3)), ((1, 10), (2, 5)), ((1, 10), (2, 6)), ((1, 10), (2, 8)), ((1, 10), (2, 9)), ((1, 10), (2, 11)), ((1, 10), (3, 5))\\
((1, 10), (3, 6)), ((1, 10), (3, 8)), ((1, 10), (3, 9)), ((1, 10), (5, 6)), ((1, 10), (5, 8)), ((1, 10), (5, 9)), ((1, 10), (5, 11))\\
((1, 10), (6, 8)), ((1, 10), (6, 9)), ((1, 11), (2, 5)), ((1, 11), (2, 10)), ((1, 11), (3, 5)), ((1, 11), (3, 9)), ((1, 11), (4, 5))\\
((1, 11), (5, 6)), ((1, 11), (5, 7)), ((1, 11), (5, 8)), ((1, 11), (5, 9)), ((1, 11), (5, 10)), ((1, 11), (6, 10)), ((1, 11), (8, 10))\\
((1, 11), (9, 10)), ((2, 3), (4, 7)), ((2, 3), (4, 9)), ((2, 3), (5, 7)), ((2, 3), (6, 7)), ((2, 3), (7, 8)), ((2, 3), (7, 9))\\
((2, 3), (7, 10)), ((2, 3), (7, 11)), ((2, 3), (8, 9)), ((2, 4), (5, 6)), ((2, 4), (5, 7)), ((2, 4), (5, 8)), ((2, 4), (5, 9))\\
((2, 4), (5, 11)), ((2, 4), (6, 7)), ((2, 4), (6, 9)), ((2, 4), (7, 8)), ((2, 4), (7, 9)), ((2, 4), (7, 11)), ((2, 4), (8, 9))\\
((2, 4), (9, 11)), ((2, 5), (3, 4)), ((2, 5), (4, 6)), ((2, 5), (4, 7)), ((2, 5), (4, 8)), ((2, 5), (4, 9)), ((2, 5), (4, 11))\\
((2, 5), (7, 10)), ((2, 6), (3, 7)), ((2, 6), (3, 10)), ((2, 6), (4, 7)), ((2, 6), (4, 10)), ((2, 6), (5, 7)), ((2, 6), (5, 10))\\
((2, 6), (7, 8)), ((2, 6), (7, 9)), ((2, 6), (7, 10)), ((2, 6), (7, 11)), ((2, 6), (8, 10)), ((2, 6), (9, 10)), ((2, 6), (10, 11))\\
((2, 7), (3, 4)), ((2, 7), (3, 5)), ((2, 7), (3, 6)), ((2, 7), (3, 8)), ((2, 7), (3, 9)), ((2, 7), (3, 11)), ((2, 7), (4, 6))\\
((2, 7), (4, 8)), ((2, 7), (4, 11)), ((2, 7), (6, 8)), ((2, 7), (6, 11)), ((2, 7), (8, 11)), ((2, 8), (3, 10)), ((2, 8), (4, 5))\\
((2, 8), (4, 7)), ((2, 8), (4, 10)), ((2, 8), (5, 7)), ((2, 8), (5, 10)), ((2, 8), (6, 7)), ((2, 8), (6, 10)), ((2, 8), (7, 10))\\
((2, 8), (7, 11)), ((2, 8), (9, 10)), ((2, 8), (10, 11)), ((2, 9), (3, 4)), ((2, 9), (3, 10)), ((2, 9), (4, 6)), ((2, 9), (4, 7))\\
((2, 9), (4, 8)), ((2, 9), (4, 10)), ((2, 9), (4, 11)), ((2, 9), (5, 10)), ((2, 9), (6, 10)), ((2, 9), (7, 10)), ((2, 9), (8, 10))\\
((2, 9), (10, 11)), ((2, 10), (4, 9)), ((2, 10), (6, 8)), ((2, 10), (6, 9)), ((2, 10), (6, 11)), ((2, 10), (8, 9)), ((2, 10), (8, 11))\\
((2, 10), (9, 11)), ((2, 11), (3, 10)), ((2, 11), (4, 7)), ((2, 11), (4, 10)), ((2, 11), (5, 7)), ((2, 11), (5, 10)), ((2, 11), (6, 7))\\
((2, 11), (6, 10)), ((2, 11), (7, 8)), ((2, 11), (7, 9)), ((2, 11), (7, 10)), ((2, 11), (8, 10)), ((2, 11), (9, 10)), ((3, 4), (5, 7))\\
((3, 4), (5, 9)), ((3, 4), (6, 7)), ((3, 4), (6, 9)), ((3, 4), (7, 8)), ((3, 4), (7, 9)), ((3, 4), (7, 11)), ((3, 4), (8, 9))\\
((3, 4), (9, 11)), ((3, 5), (4, 7)), ((3, 5), (4, 9)), ((3, 5), (6, 7)), ((3, 5), (7, 8)), ((3, 5), (7, 9)), ((3, 5), (7, 10))\\
((3, 5), (7, 11)), ((3, 6), (4, 7)), ((3, 6), (4, 9)), ((3, 6), (5, 7)), ((3, 6), (7, 8)), ((3, 6), (7, 9)), ((3, 6), (7, 10))\\
((3, 6), (7, 11)), ((3, 7), (4, 6)), ((3, 7), (4, 9)), ((3, 7), (5, 6)), ((3, 7), (6, 8)), ((3, 7), (6, 9)), ((3, 7), (6, 11))\\
((3, 7), (8, 11)), ((3, 8), (4, 5)), ((3, 8), (4, 7)), ((3, 8), (4, 9)), ((3, 8), (5, 7)), ((3, 8), (6, 7)), ((3, 8), (7, 9))\\
((3, 8), (7, 10)), ((3, 8), (7, 11)), ((3, 9), (6, 7)), ((3, 9), (7, 8)), ((3, 9), (7, 11)), ((3, 10), (4, 9)), ((3, 10), (6, 8))\\
((3, 10), (6, 9)), ((3, 10), (6, 11)), ((3, 10), (8, 9)), ((3, 10), (8, 11)), ((3, 10), (9, 11)), ((3, 11), (4, 5)), ((3, 11), (4, 7))\\
((3, 11), (4, 9)), ((3, 11), (5, 7)), ((3, 11), (6, 7)), ((3, 11), (6, 9)), ((3, 11), (7, 8)), ((3, 11), (7, 9)), ((3, 11), (7, 10))\\
((4, 5), (6, 8)), ((4, 5), (8, 9)), ((4, 5), (8, 11)), ((4, 5), (9, 11)), ((4, 6), (5, 7)), ((4, 6), (5, 8)), ((4, 6), (5, 9))\\
((4, 6), (5, 11)), ((4, 6), (7, 8)), ((4, 6), (7, 9)), ((4, 6), (7, 11)), ((4, 6), (8, 9)), ((4, 6), (9, 11)), ((4, 7), (5, 6))\\
((4, 7), (5, 8)), ((4, 7), (5, 9)), ((4, 7), (5, 11)), ((4, 7), (6, 11)), ((4, 7), (8, 11)), ((4, 7), (9, 11)), ((4, 8), (5, 6))\\
((4, 8), (5, 7)), ((4, 8), (5, 9)), ((4, 8), (5, 11)), ((4, 8), (6, 7)), ((4, 8), (6, 9)), ((4, 8), (7, 9)), ((4, 8), (7, 11))\\
((4, 8), (9, 11)), ((4, 9), (5, 8)), ((4, 9), (5, 11)), ((4, 9), (6, 10)), ((4, 9), (8, 10)), ((4, 9), (10, 11)), ((4, 10), (6, 8))\\
((4, 10), (6, 9)), ((4, 10), (6, 11)), ((4, 10), (8, 9)), ((4, 10), (8, 11)), ((4, 10), (9, 11)), ((4, 11), (5, 6)), ((4, 11), (5, 7))\\
((4, 11), (5, 8)), ((4, 11), (5, 9)), ((4, 11), (6, 7)), ((4, 11), (6, 9)), ((4, 11), (7, 8)), ((4, 11), (7, 9)), ((4, 11), (8, 9))\\
((5, 7), (6, 11)), ((5, 7), (8, 11)), ((5, 10), (6, 8)), ((5, 10), (6, 9)), ((5, 10), (6, 11)), ((5, 10), (8, 9)), ((5, 10), (8, 11))\\
((5, 10), (9, 11)), ((6, 7), (8, 11)), ((6, 8), (7, 10)), ((6, 8), (7, 11)), ((6, 8), (9, 10)), ((6, 8), (10, 11)), ((6, 9), (8, 10))\\
((6, 9), (10, 11)), ((6, 10), (8, 9)), ((6, 10), (8, 11)), ((6, 10), (9, 11)), ((6, 11), (7, 8)), ((6, 11), (7, 9)), ((6, 11), (7, 10))\\
((6, 11), (8, 10)), ((6, 11), (9, 10)), ((7, 9), (8, 11)), ((7, 10), (8, 11)), ((8, 9), (10, 11)), ((8, 10), (9, 11)), ((8, 11), (9, 10))$

The rest of the edges are blue. 

\section{Appendix: Our code}\label{Apen: code}
Below is the code we used to obtain the results in Theorem~\ref{thm: computational}.

\begin{verbatim}
    import itertools as it
    from pysat.solvers import Glucose42
    
    '''
    Builds Kneser Graph Kn(n, r)
    Input: n, r
    Output: (V, E)
        V (list): Vertices (all r-subsets of [n]) as r-tuples
        E (list of 2-tuples): Edges as a pair of r-tuples
    '''
    def build_kneser_graph(n,r):
        V = list(it.combinations(range(1, n + 1), r))
        E = []
        for (a, b) in it.combinations(V, 2):
            if set(a).isdisjoint(set(b)):
                E.append((a, b))
                
        return (V, E)
    
    '''
    Try to find if Kn(n,r) can be 2-colored so that there is no red K_s or blue K_t
    
    For no red K_s, for every s-clique, need at least 1 blue edge. 
    That is, if e_1, ..., e_k are the edges of an s-clique and e_i = 1 if red, 
    e_i = -1 if blue, want to satisfy (-e_1 v -e_2 v ... v -e_k)
    
    For no blue K_t, for every t-clique, need at least 1 red edge. 
    That is, if e_1, ..., e_k are the edges of a t-clique and e_i = 1 if red, 
    e_i = -1 if blue, want to satisfy (-e_1 v -e_2 v ... v -e_k)
    
    (NOTE: 1 corresponds to True, -1 corresponds to False)
        
    If there is a good coloring, outputs (True, R, B), where R = red edges, 
    B = blue edges
    If there is no good coloring, outputs (False, [], [])
    '''
    def build_sat_instances(n, r, s, t):
        solver = Glucose42()
        clauses = []
        V = []
        E = []
        
        #Create vertex and edge list
        (V, E) = build_kneser_graph(n, r)
        
    
        for s_tuple in it.combinations(V, s):
            
            #check that the s-tuple forms a clique in Kn(n,r)
            if all([set(s_tuple[i]).isdisjoint(set(s_tuple[j])) for (i,j) in 
            it.combinations(range(len(s_tuple)), 2)]):
                
                #Add clause forbidding monochromatic K_s and K_t (if s = t)
                if s == t:
                    cliq = []
                    for edg in it.combinations(s_tuple, 2):
                        idx = -1
                        try:
                            idx = E.index(edg)
                            
                        except:
                            idx = E.index((edg[1], edg[0]))
                            
                        cliq.append(idx + 1)
                    
                    solver.add_clause([-x for x in cliq])
                    solver.add_clause(cliq)
                   
                #Add clause forbidding red K_s 
                if s != t:
                    cliq = []
                    for edg in it.combinations(s_tuple, 2):
                        idx = -1
                        try:
                            idx = E.index(edg)
                            
                        except:
                            idx = E.index((edg[1], edg[0]))
                            
                        cliq.append(idx + 1)
                            
                    solver.add_clause([-x for x in cliq])
                        
        if s != t:
            for t_tuple in it.combinations(V, t):
                
                #check that the t-tuple forms a clique in Kn(n,r)
                if all([set(t_tuple[i]).isdisjoint(set(t_tuple[j])) for (i,j) in
                it.combinations(range(len(t_tuple)), 2)]):
                    
                    #Add clause forbidding red K_t
                    cliq = []
                    for edg in it.combinations(t_tuple, 2):
                        idx = -1
                        try:
                            idx = E.index(edg)
                            
                        except:
                            idx = E.index((edg[1], edg[0]))
                            
                        cliq.append(idx + 1)
                            
                    solver.add_clause(cliq)
        
        #Solve SAT problem and recover coloring
        result = solver.solve()
        R = []
        B = []
        if result:
            L = solver.get_model()
            R = [E[i - 1] for i in L if i > 0]
            B = [E[-i - 1] for i in L if i < 0]
        
        return (result, R, B)
    
    #--------------------------------------------------
    #output: True, R, B
    build_sat_instances(8, 2, 3, 3)
    
    #output: False, [], []
    build_sat_instances(9, 2, 3, 3)
    
    #--------------------------------------------------
    
    #output: False, [], []
    build_sat_instances(13, 3, 3, 3)

\end{verbatim}

\end{document}